\newcommand{\be} {\begin{eqnarray}}
\newcommand{\ee} {\end{eqnarray}}
\newcommand{\bep} {\begin{eqnarray*}}
\newcommand{\eep} {\end{eqnarray*}}
\newcommand {\Hol}{\mathop{\rm Hol}\nolimits}
\newcommand {\Id}{\mathop{\rm Id}\nolimits}
\renewcommand {\Re}{\mathop{\rm Re}\nolimits}
\newcommand {\G}{\mathcal{G}}
\newcommand {\A}{\mathcal{A}}
\newcommand {\FF}{\mathfrak {F}}
\newcommand{\R}{{\mathbb R}}
\newcommand{\C}{{\mathbb C}}
\newcommand {\D}{\mathbb{D}}
\newtheorem{remar}{Remark}[section]
\newtheorem{examp}{Example}[section]
\newtheorem{defin}{Definition}[section]
\newtheorem{corol}{Corollary}[section]
\newtheorem{theorem}{Theorem}[section]
\newtheorem{lemma}{Lemma}[section]
\newtheorem{conj}{Conjecture}
\newtheorem{questn}{Question}
\newcommand{\rema}{\begin{remar}\rm}
\newcommand{\erema}{$\blacktriangleright$\end{remar}}
\newcommand{\exa}{\begin{examp}\rm}
\newcommand{\eexa}{$\blacktriangleright$\end{examp}}
\def\lwvec(#1 #2){\linewd 0.1
           \lvec(#1 #2)
           \linewd 0.05}
\begin{document}

\title[Fekete--Szeg\"{o} problem and filtration]{The Fekete--Szeg\"{o} problem and filtration of generators}
%{Estimates on Taylor's coefficients and the Fekete--Szeg\"{o} functional over non-linear resolvents}

\author[M. Elin]{Mark Elin}

\address{Department of Mathematics,
         Ort Braude College,
         Karmiel 21982,
         Israel}

\email{mark$\_$elin@braude.ac.il}

\author[F. Jacobzon]{Fiana Jacobzon}

\address{Department of Mathematics,
         Ort Braude College,
         Karmiel 21982,
         Israel}

\email{fiana@braude.ac.il}

\author[N. Tuneski]{Nikola Tuneski}

\address{Department of Mathematics and Informatics,
         Ss. Cyril and Methodius University,
         Skopje 1000,
         Republic of North Macedonia}

\email{nikola.tuneski@mf.edu.mk}

\keywords{Fekete--Szeg\"{o} problem, infinitesimal generator, filtration, differential subordination}
\subjclass[2010]{Primary 30C50; Secondary 30C45, 30A10}

\begin{abstract}
In this paper we study an interpolation problem involving the Fekete--Szeg\"{o} functional. It turns out that this problem links to the so-called filtration of infinitesimal generators. We introduce new filtration classes using the non-linear differential operator
\[\alpha\frac{f(z)}{z}+\beta\frac{zf'(z)}{f(z)}+(1-\alpha-\beta)\left(1+\frac{zf''(z)}{f'(z)}\right)\]
and establish certain properties of these classes. Sharp upper bounds of the modulus of the Fekete--Szeg\"{o} functional over some filtration classes are found. We also present open problems for further study.

%{\footnotesize Key words and phrases: semigroup of holomorphic
%mappings, semigroup of composition operators, infinitesimal
%generator, dissipative operator, function convex in one direction.
\end{abstract}

\maketitle
%\centerline\today

\section{Introduction and preliminaries}\label{sect-intro}

Let $\D$ be the open unit disk in the complex plane $\C$. Denote by $\Hol(\D,\C)$ the set of holomorphic functions on $\D$, and by $\Hol(\D) := \Hol(\D,\D)$, the set of all holomorphic self-mappings of $\D$.

Let $\Omega$ be the subclass of $\Hol(\D)$ consisting of functions vanishing at the origin:
\begin{equation*}\label{def-U}
\Omega=\{ \omega \in \Hol(\D):\ \omega(0)=0 \}.
\end{equation*}
The identity mapping on $\D$ will be denoted by $\Id$.
By $\A$ we denote the subset of $\Hol(\D,\C)$ consisting of  functions normalized by $f(0)=f'(0)-1=0$.

The Fekete--Szeg\"{o} functional over $\A$ is of special interest. It is named so after their seminal work \cite{F-S},  found numerous applications in geometric function theory and were studied by many mathematicians (see, for example, \cite{Ke-Me, NT-1, NT-2, E-J-21a, YAI21, Zap-Tur},  for general and unified approaches see \cite{ChKSug, Kanas}).
Given a function $f(z)=z+\sum\limits_{k=2}^{\infty}f_kz^k$ and a number $\lambda \in \C$, consider the quadratic functionals of the form
\[
\Phi(f, \lambda):=f_3-\lambda f_2^2.
\]
The Fekete--Szeg\"{o} problem for some class of analytic function is to find sharp estimates for the modulus of the functional $\Phi(\cdot, \lambda)$ over this class.

It was shown by Keogh and Merkes in \cite{Ke-Me}  that
  \begin{equation}\label{K-M}
    \left| \Phi( f,\lambda)\right|\le \left\{ \begin{array}{ll}
                                               \max(\frac13,|1-\lambda|) & \mbox{ for }\ f\in\mathcal{C},\vspace{2mm} \\
                                               \max(\frac12,|1-\lambda|) &  \mbox{ for }\  f\in S^*\left(\frac12\right), \end{array}                       \right.
  \end{equation}
and these inequalities are sharp. In addition, it can be shown that for the class $\mathcal{A}_{\frac 12}$ of functions which satisfy $\Re \frac{f(z)}{z}>\frac{1}{2}$, $ z\in\D$, (see, for example, Theorem~2.2 in \cite{MF-estim-func}) the following sharp inequality holds
 \begin{equation}\label{class_A}
    \left| \Phi(f,\lambda)\right|\le  \max(1 ,|1-\lambda|).
 \end{equation}
This leads to the following question:
\begin{itemize}
\item[$\bullet$] Given $\mu>0$, define a class $\FF_\mu$ consisting of all normalized analytic  functions, such that  the Fekete--Szeg\"{o} functional satisfies the sharp estimate $\sup_{f \in \FF_\mu} |\Phi(f,\lambda)|= \max \left(\mu,|1-\lambda|\right)$.
\end{itemize}
In this paper we study the more concrete question:
\begin{itemize}
\item[$\bullet$] Find interpolation $\left\{\FF_\mu\right\},\ \mu\in\left[\frac13,1\right],$ of the classes $\mathcal{C}$ and $\A_{\frac12}$ such that $\FF_{\frac12}=S^*(\frac12)$ and $\sup_{f \in \FF_\mu} |\Phi(f,\lambda)|= \max\left( \mu,|1-\lambda| \right)$.
\end{itemize}

Note that each function of the class $\A$ is locally invertible around the origin. Therefore, in fact, the above question is  a particular case of the following one:
\begin{itemize}
\item [$\bullet$] Describe all such subsets $\FF_\mu\subset \A$ with the property
      $$ \sup_{f \in \FF_\mu} |\Phi(f^{-1},\lambda)|=\sup_{f \in \FF_\mu} |\Phi(f,\lambda)|.$$
  \end{itemize}

 In what follows we will find a close connection between the above questions and the class of so-called infinitesimal generators. Recall that by definition a mapping $f\in\Hol(\D,\C)$ is called an (infinitesimal) generator if for every $z\in\D$ the Cauchy problem
\begin{equation}  \label{nS1}
\left\{
\begin{array}{l}
\frac{\partial u(t,z)}{\partial t}+f(u(t,z))=0    ,     \vspace{2mm} \\
u(0,z)=z,%
\end{array}%
\right.
\end{equation}%
has a unique solution $u=u(t,z)\in\D$ for all $t\geq 0$. In this case, the unique solution of \eqref{nS1} forms a semigroup of holomorphic self-mappings of the open unit disk $\D$ generated by $f$. Various properties of generators and semigroups generated by them can be found, for example, in recent books \cite{B-C-DM-book, E-R-Sbook, E-S-book}. For our purposes we need the following characterization of generators.

 \begin{theorem} \label{teorA}
 Let $f\in \Hol(\D , \C),\  f\not\equiv0$. Then $f$ is a generator on $\D$ if and only if  there exist a point $\tau\in \overline\D$ and a function $p\in\Hol(\D,\C)$ with ${\Re p(z)\ge0}$ such that
\begin{equation}\label{b-p}
f(z)=(z-\tau )(1-z\overline{\tau })p(z),\quad z\in\D.
\end{equation}
 \end{theorem}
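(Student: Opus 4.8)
The statement is the Berkson--Porta representation of one-parameter semigroup generators, and my plan is to prove the two implications separately, using in each case a M\"obius change of variable to move the relevant fixed point to the origin, where the computations become transparent. The organizing principle is that every interior-fixed-point situation reduces, after conjugation, to the model field $f(z)=z\,p(z)$, for which the invariance of $\D$ and the sign of $\Re p$ can be read off directly from the elementary identity $\frac{d}{dt}|u|^2=-2\Re(\overline u\,f(u))$; the boundary fixed point then has to be handled by separate, finer tools.

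For the \emph{sufficiency} direction I would assume $f(z)=(z-\tau)(1-z\overline\tau)p(z)$ with $\Re p\ge 0$ and first treat $\tau\in\D$. Conjugating the vector field by the automorphism $m_\tau(z)=\frac{z-\tau}{1-\overline\tau z}$, which sends $\tau$ to $0$, a direct computation shows that the transformed generator is $g(w)=w\,\widetilde p(w)$ with $\widetilde p(w)=(1-|\tau|^2)\,p\!\left(m_\tau^{-1}(w)\right)$, so that $\Re\widetilde p\ge 0$ as well. For such $g$ the solution of $\partial_t u=-g(u)$ satisfies $\frac{d}{dt}|u|^2=-2|u|^2\Re\widetilde p(u)\le 0$, so $|u(t,\cdot)|$ is nonincreasing; the local trajectory therefore stays in a fixed compact subdisk, never reaches $\partial\D$, and extends to all $t\ge 0$. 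Transporting back by $m_\tau^{-1}$ produces the semigroup generated by $f$, solving \eqref{nS1}. When $\tau\in\partial\D$ the conjugation cannot move $\tau$ inside, but the boundary identity $\Re\big(\overline z\,f(z)\big)=|z-\tau|^2\Re p(z)\ge 0$ (valid for $|z|=1$) shows the field points inward along $\partial\D$, so I would close this case by a Nagumo-type flow-invariance argument or by approximating $\tau$ with interior points $\tau_\varepsilon\to\tau$.

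For the \emph{necessity} direction I would start from a semigroup $\{F_t\}$, $F_t(z)=u(t,z)$, generated by $f$, and invoke the Denjoy--Wolff theory for one-parameter semigroups to obtain a distinguished point $\tau\in\overline\D$ that is either an interior common fixed point or a boundary attracting point. If $\tau\in\D$ then $F_t(\tau)=\tau$ for all $t$, hence $f(\tau)=-\partial_t F_t(\tau)\big|_{t=0}=0$; since $1-\overline\tau z$ is zero-free on $\D$, the quotient $p(z)=\frac{f(z)}{(z-\tau)(1-\overline\tau z)}$ is holomorphic. To establish $\Re p\ge 0$ I would again conjugate so that $\tau=0$ and $g(w)=w\,\widetilde p(w)$: the conjugated maps $G_t$ fix $0$, so by the Schwarz lemma $|G_t(w)|\le|w|=|G_0(w)|$ for every $t\ge 0$. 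Thus $t\mapsto|G_t(w)|^2$ is maximal at $t=0$, its right derivative there is nonpositive, and since that derivative equals $-2|w|^2\Re\widetilde p(w)$ I conclude $\Re\widetilde p\ge 0$, hence $\Re p\ge 0$.

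The main obstacle is the boundary case $\tau\in\partial\D$, where the clean conjugation to the origin is unavailable in both directions. Here I expect to need the finer Denjoy--Wolff and Julia--Wolff--Carath\'eodory apparatus: existence and uniqueness of the boundary attracting point, finiteness of the angular derivative of the $F_t$ at $\tau$, and Julia's lemma, in order both to extract the factor $(z-\tau)(1-\overline\tau z)$ and to verify $\Re p\ge 0$ through a boundary Schwarz-type estimate. By contrast, once $\tau$ is interior everything collapses, via M\"obius conjugation, to the transparent model $f(z)=z\,p(z)$ at the origin.
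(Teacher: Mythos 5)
The paper does not actually prove this statement: Theorem~\ref{teorA} is quoted as the classical Berkson--Porta representation and attributed to \cite{B-P}, so there is no in-paper argument to compare yours against. Judged on its own terms, your interior-fixed-point analysis is correct and complete in both directions: the conjugation by $m_\tau$ does transform the field into $w\,\widetilde p(w)$ with $\widetilde p=(1-|\tau|^2)\,p\circ m_\tau^{-1}$, the identity $\frac{d}{dt}|u|^2=-2|u|^2\Re\widetilde p(u)$ gives global forward existence and invariance of $\D$ when $\Re\widetilde p\ge0$, and differentiating the Schwarz-lemma inequality $|G_t(w)|\le|w|$ at $t=0$ gives the converse.

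The genuine gap is the boundary case $\tau\in\partial\D$, which you defer in both directions to ``Nagumo-type'' or ``Julia--Wolff--Carath\'eodory'' arguments without executing them --- and this is precisely where the substance of Berkson--Porta lies. For sufficiency, the inequality $\Re(\overline z f(z))=|z-\tau|^2\Re p(z)\ge0$ is only formal on $|z|=1$, where $p$ need not be defined; the working substitute is to show that the horocycle functional $\Lambda(z)=|\tau-z|^2/(1-|z|^2)$ is nonincreasing along trajectories, or else to approximate $\tau$ by interior points and invoke the (true but nontrivial) fact that the class of generators is closed under locally uniform convergence --- either way a real argument is still owed. For necessity, when $\tau\in\partial\D$ the factor $(z-\tau)(1-z\overline\tau)=-\overline\tau(z-\tau)^2$ is zero-free in $\D$, so defining $p$ costs nothing, but establishing $\Re p\ge0$ requires differentiating Julia's invariance of the horodisks $\{\Lambda<R\}$ under $F_t$ at $t=0$; nothing in your sketch actually produces that inequality. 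You also pass silently over the case in which the semigroup consists of elliptic automorphisms (where the Denjoy--Wolff theorem for a single map does not apply); a continuous semigroup of such automorphisms still has a common interior fixed point, so this folds into your interior argument, but it should be said.
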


We notice that formula \eqref{b-p} is called the {\it Berkson--Porta representation} after the seminal work \cite{B-P} by Berkson and Porta. In particular,  $f\in \A$ is a generator if and only if $\Re \frac{f(z)}{z}>0$,  $z\in\D$. We denote the class of such generators by $\G$. It is worth mentioning that the classical result by Marx and Strohh\"{a}cker \cite{Mar, Stro} implies
 $$\mathcal{C}\subsetneq S^*\left(\frac12\right) \subsetneq \A_{\frac12}\subsetneq \G,$$
 so that the classes $\mathcal{C},\ S^*\left(\frac12\right) $ and $\A_{\frac12}$ are proper subclasses of $\G$.

Although the condition $\Re \frac{f(z)}{z}>0$ seems to be very simple, often in practice given $f\in \A $, it is hard to verify it (or the another equivalent conditions). For instance,  it is not trivial to check  whether  the function $f(z) =-z -2\log (1-z)$
is an infinitesimal generator. However, it was shown in \cite[Theorem 1.3]{BCDES}  that a sufficient condition for $f\in \A$ to be a generator is $\Re f'(z)\geq 0$ for all $z\in \D$. Hence, since
$\Re f'(z) =
\Re\frac{1+z}{1-z}>0,z\in\D,$  it follows at once that, in fact, $f$ is a generator.

The condition $\Re f'(z)\geq 0$, $z\in\D$, for $f\in \mathcal{A}$ implies by the
Noshiro-Warschawski Theorem (see, for example, \cite{E-R-Sbook, NT-3}) that $f$ is univalent. Since not all infinitesimal generators are univalent, the condition $\Re f'(z)\geq 0$,  $z\in\D$, is far from being a necessary condition for membership in $\G$. Therefore, the following question is important.

\begin{itemize}
\item [$\bullet$] Given a subclass $M \subset \A$ find conditions providing $M\subseteq\G.$
\end{itemize}

Through the paper this question will attract our special attention.

\medskip
The notion of filtration of the class $\G$ was first introduced in \cite{BCDES}, see also \cite{E-S-S} and \cite{E-S-T}.
%We slightly change it to give it in a more  general form.
%\newpage

\begin{defin}\label{def-filt}
A {\sl filtration} of $\mathcal G$ is a family
$\mathfrak{F}= \left \{\mathfrak F_s\right\}_{s\in [a,b]},\
\mathfrak{F}_s\subseteq\G,$ where $a,b \in [-\infty, +\infty],\
a<b$, such that $\mathfrak F_s\subseteq \mathfrak F_t$ whenever
$a\le s\le t\leq b$. Moreover, we say that the filtration $\left \{ \mathfrak F_s\right\}_{s\in [a,b]}$
is {\sl strict} if $\mathfrak F_s\subsetneq \mathfrak F_t$ for
$s<t$.
\end{defin}

Thus we can refine one of the questions posed above:
\begin{itemize}
\item [$\bullet$] Determine a filtration $\{\FF_\mu\}_{\mu>0}$ such that
$${\sup_{f \in \FF_\mu} |\Phi(f,\lambda)|= \max\left( \mu,|1-\lambda| \right)}.$$
\end{itemize}

\medskip
In the next sections we study above questions and prove our main results.
For the sake of completeness we now present some statements from  geometric function theory, that will be explored in the proofs.

The first assertion is a result from the theory of differential subordinations, namely, a special case of Theorem 2.3(i) from \cite{MiMo} when $a=n=1$.
\begin{lemma}\label{lem-mm-e}
Let function $p$ be analytic on the unit disk $\D$ and $p(0)=1$. Let  $\Omega\subset \C$  and function $\psi:\mathbb{C}^3\times\D\to \mathbb{C}$ satisfy
\begin{equation*}\label{eq-mm-2}
  \psi(\rho i,\sigma,\mu+\nu i;z) \notin\Omega\quad (z\in\D),
\end{equation*}
when  $\rho,\sigma,\mu,\nu\in \mathbb{R}$, $\sigma\le-\frac{1+\rho^2}{2}$, $\sigma+\mu\le0.$
If
\begin{equation*}\label{eq-mm-1}
  \psi(p(z),zp'(z),z^2p''(z);z) \in\Omega\quad (z\in\D),
\end{equation*}
then $\operatorname{Re} p(z)>0$, $z\in\D$.
\end{lemma}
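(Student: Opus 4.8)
The plan is to prove this as a direct specialization of the general admissibility theorem from the theory of differential subordinations (Miller--Mocanu), so the work is really one of careful bookkeeping rather than the invention of a genuinely new argument. The statement we must establish is the following implication: if $p$ is analytic on $\D$ with $p(0)=1$, and the composite $\psi(p(z),zp'(z),z^2p''(z);z)$ lands inside a prescribed set $\Omega$ for every $z\in\D$, while $\psi$ refuses to take values in $\Omega$ on the specific ``boundary'' locus described by $\sigma\le-\tfrac{1+\rho^2}{2}$ and $\sigma+\mu\le 0$, then necessarily $\Re p(z)>0$ throughout $\D$.

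First I would set up the contrapositive-style reduction that underlies every admissible-function argument. Suppose, for contradiction, that $\Re p$ is not everywhere positive on $\D$. Since $p(0)=1$ has positive real part and $p$ is continuous, the open set where $\Re p>0$ is a proper nonempty subset of $\D$, so there exists a point $z_0\in\D$ at which the trajectory of $p$ first touches the imaginary axis, i.e. $\Re p(z_0)=0$ while $\Re p(z)>0$ on a disk $|z|<|z_0|$. The key technical input, which is exactly the content of the Miller--Mocanu lemmas on functions with a minimum modulus / boundary value of an associated Carath\'eodory-type quantity, is that at such a first contact point one has the constraints
\[
p(z_0)=\rho i,\qquad z_0 p'(z_0)=\sigma,\qquad \Re\!\left(z_0^2 p''(z_0)+z_0 p'(z_0)\right)\le 0,
\]
where $\rho\in\R$ is real, and $\sigma$ is real with $\sigma\le -\tfrac{1+\rho^2}{2}$. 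Writing $z_0^2 p''(z_0)=\mu+\nu i$ with $\mu,\nu\in\R$, the last inequality becomes precisely $\sigma+\mu\le 0$. This is where the special case $a=n=1$ of Theorem~2.3(i) of \cite{MiMo} is doing all the heavy lifting: it certifies that these are exactly the real-axis/half-plane conditions satisfied at a first touching point.

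The hard part — or at least the only part requiring genuine care — is verifying that the point $z_0$ provided by the maximum-principle machinery really does produce arguments $(\rho i,\sigma,\mu+\nu i;z_0)$ satisfying \emph{all three} hypotheses simultaneously, namely reality of the first two slots, the bound $\sigma\le-\tfrac{1+\rho^2}{2}$, and $\sigma+\mu\le 0$; this is where the normalization $p(0)=1$ and the index $n=1$ enter, pinning down the numerical constant $\tfrac12$ and ruling off-axis configurations out. Once this is in hand, the conclusion is immediate: by hypothesis on $\psi$ we have $\psi(\rho i,\sigma,\mu+\nu i;z_0)\notin\Omega$, yet $z_0\in\D$ and the standing assumption gives $\psi(p(z_0),z_0 p'(z_0),z_0^2 p''(z_0);z_0)=\psi(\rho i,\sigma,\mu+\nu i;z_0)\in\Omega$, a contradiction. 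Hence no such first contact point exists, $\Re p>0$ on all of $\D$, and the proof closes. I would therefore present the argument by quoting \cite[Theorem~2.3(i)]{MiMo} in the instance $a=n=1$ and then translating its notation into the three displayed constraints above, leaving the verification of those constraints as the single substantive step.
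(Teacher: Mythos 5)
Your proposal is correct and matches the paper's treatment: the paper gives no proof of this lemma at all, importing it verbatim as the special case $a=n=1$ of Theorem~2.3(i) in Miller--Mocanu, which is exactly the reduction you describe. The contact-point constraints you list ($p(z_0)=\rho i$, $z_0p'(z_0)=\sigma$ real with $\sigma\le-\tfrac{1+\rho^2}{2}$, and $\sigma+\mu\le 0$) are precisely the half-plane admissibility conditions with $n=1$ coming from the underlying Miller--Mocanu contact-point lemma, so your sketch of the argument behind the cited theorem is also sound.
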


We will also use Theorem~1 and 2 from \cite{MiMo-85} that can be combined as follows.

\begin{lemma}\label{lem-mm-2}
Let $\beta,\gamma \in \C$,  $\beta\neq0$. Let $h\in\Hol(\D,\C)$ with $h'(0)\neq0$ and $P(z)=\beta h(z)+\gamma$. Consider the Briot--Bouquet differential equation
\[ q(z)+\frac{zq'(z)}{\beta q(z)+\gamma} = h(z), \quad h(0)=q(0).
\]
\begin{itemize}
  \item[(i)] If  $\Re P(z)>0$ for $z\in\D$ then its solution $q$ is analytic in $\D.$
  \item[(ii)] If, in addition, functions $Q(z):=\log P(z)$ and $R(z):= 1/P(z)$ are convex in $\D$, then $q$ is univalent in $\D$.
\end{itemize}

\end{lemma}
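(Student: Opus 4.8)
The plan is to strip the two parameters $\beta,\gamma$ out of the equation by an affine change of the unknown, and then handle analyticity (part (i)) and univalence (part (ii)) by separate mechanisms. First I would substitute $w=\beta q+\gamma$, so that $q=(w-\gamma)/\beta$ and, after multiplying through by $\beta$, the Briot--Bouquet equation becomes the normalized first-order equation
\[ w(z)+\frac{zw'(z)}{w(z)}=\beta h(z)+\gamma=P(z),\qquad w(0)=\beta q(0)+\gamma=P(0). \]
This reduces everything to a statement about $P$ alone, and since $q\mapsto w$ is affine with $\beta\neq0$, analyticity and univalence of $q$ are equivalent to those of $w$.

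For part (i) I would note that the normalized equation is a Bernoulli equation: clearing the denominator and then setting $s=1/w$ linearizes it,
\[ zw'(z)=P(z)w(z)-w(z)^{2}\quad\Longrightarrow\quad zs'(z)+P(z)s(z)=1. \]
Solving this linear equation with an integrating factor $M$ characterized by $zM'/M=P$ (so that $M$ behaves like $z^{P(0)}$ near the origin) produces a one-parameter family of solutions. This is exactly where $\Re P>0$ enters: because $\Re P(0)>0$, the factor $z^{P(0)}$ tends to $0$ at the origin and the relevant integral $\int_0^z\zeta^{P(0)-1}(\cdots)\,d\zeta$ converges, so the integration constant is forced to the unique value cancelling the branch singularity. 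The resulting $s$ is analytic with $s(0)=1/P(0)\neq0$, whence $w=1/s$, and therefore $q$, is analytic on $\D$; the fact that $w$ does not vanish (so that $q$ genuinely solves the equation) follows from the companion subordination fact that $\Re P>0$ forces $\Re w>0$.

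For part (ii) I would use the explicit shape of the solution. Writing $\Psi(z)=\int_0^z M(\zeta)/\zeta\,d\zeta$, the computation above yields the representation $w=z\Psi'/\Psi$ together with the relation $1+z\Psi''/\Psi'=P$; thus $\Re P>0$ says precisely that $\Psi$ satisfies the analytic convexity condition $\Re\bigl(1+z\Psi''/\Psi'\bigr)>0$, and univalence of $q$ becomes univalence of the logarithmic-derivative-type quotient $z\Psi'/\Psi$. To obtain this I would build a subordination (Loewner) chain $L(\cdot,t)$ with $L(\cdot,0)$ matching $w$ and verify the chain condition $\Re\left(\frac{z\,\partial L/\partial z}{\partial L/\partial t}\right)>0$, which guarantees that each slice, in particular $w$, is univalent. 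The two hypotheses that $Q=\log P$ and $R=1/P$ are convex are exactly what is needed here: one of them controls the argument and the other the modulus of the expression appearing in the chain condition.

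I expect part (ii) to be the genuine obstacle. Part (i) is a routine linear-ODE computation once the Bernoulli substitution is made, but the univalence assertion requires converting the two convexity hypotheses into a verified univalence criterion and matching each of them correctly to the quantity it must control in the chain (or equivalent) condition; that bookkeeping, rather than any single estimate, is the delicate part.
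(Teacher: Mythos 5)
This lemma is not proved in the paper at all: it is a verbatim combination of Theorems~1 and~2 of Miller and Mocanu's 1985 paper on univalent solutions of Briot--Bouquet equations (reference \cite{MiMo-85}), quoted as a known tool. So there is no in-paper argument to compare yours against; I can only judge the proposal on its own terms.

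Your treatment of part (i) is the classical argument and is essentially sound: the substitution $w=\beta q+\gamma$ normalizes the equation to $w+zw'/w=P$, the Bernoulli/linearization step $s=1/w$, $zs'+Ps=1$ is correct, and the condition $\Re P(0)>0$ does force the unique integration constant for which $s=\Psi/M$ (with $z\Psi'=M$, $zM'/M=P$) is single-valued and analytic at the origin with $s(0)=1/P(0)$. The soft spot is the final step: analyticity of $s$ near $0$ does not give analyticity of $w=1/s$ on all of $\D$; you must exclude zeros of $s$ throughout the disk. Your justification --- that ``$\Re P>0$ forces $\Re w>0$'' --- is circular as written, since that subordination statement is a property of the solution once it is known to exist and be analytic. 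The standard repair is a continuation argument: on the largest disk about the origin where $s\neq0$, one proves $\Re w>0$ via a Jack's-lemma/admissibility estimate, and this a priori bound prevents $s$ from vanishing on the boundary of that disk. This needs to be said explicitly; as posed, the nonvanishing of $\Psi$ is exactly the nontrivial content of the existence theorem.

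Part (ii) is where the proposal stops being a proof. You correctly identify the representation $w=z\Psi'/\Psi$, the relation $1+z\Psi''/\Psi'=P$, and the right tool (a subordination chain $L(z,t)$ with $L(\cdot,0)=w$ and $\Re\bigl(z\,\partial L/\partial z\,/\,\partial L/\partial t\bigr)>0$), but you construct no chain and verify nothing. The entire substance of Miller--Mocanu's Theorem~2 is the explicit choice of $L(z,t)$ built from $\Psi$ and the derivation of the chain inequality from the convexity of $Q=\log P$ and $R=1/P$; saying that ``one of them controls the argument and the other the modulus'' is a guess about how the hypotheses will be consumed, not an argument. As it stands, part (ii) is a plan, and you acknowledge as much. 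If you intend to prove the lemma rather than cite it, that construction is the step you must actually carry out; otherwise the honest course --- and the one the paper itself takes --- is to cite \cite{MiMo-85} for both parts.
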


Simple geometric considerations lead to the next fact.
\begin{lemma}\label{lemm-Psi}
Fix  $\gamma\in\R$ and $g \in \Hol(\D,\C)$. Then $\Re g(z) > \gamma,\ z\in\D,$ if, and only if, the function $\omega$ defined by
\begin{equation}\label{Psi}
\omega(z)=\frac{g(z)-g(0)}{g(z)-2\gamma+g(0)}
\end{equation}
belongs to $\Omega$.
\end{lemma}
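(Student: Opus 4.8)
The plan is to recognize that $\omega$ is simply $g$ post-composed with the M\"{o}bius transformation
\[
T(w)=\frac{w-g(0)}{w-2\gamma+\overline{g(0)}},
\]
that is, $\omega=T\circ g$, and to reduce the whole lemma to the elementary geometric fact that $T$ maps the half-plane $H_\gamma:=\{w\in\C:\Re w>\gamma\}$ conformally onto $\D$, carrying the base point $g(0)$ to $0$. (In every application of this lemma the normalization $g(0)=1$ is in force, so $\overline{g(0)}=g(0)$ and the displayed formula is recovered verbatim; the conjugate is what keeps $T$ sending the line $\Re w=\gamma$ to the unit circle for genuinely non-real $g(0)$.) I would first observe that $\omega(0)=T(g(0))=0$ holds automatically, so the only real content is the self-map property $\omega(\D)\subseteq\D$.

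The key step is a modulus computation. Writing $w=g(z)$ and $c=g(0)$, a direct expansion gives the clean identity
\[
\left|w-2\gamma+\overline{c}\right|^2-\left|w-c\right|^2=4\,(\Re w-\gamma)(\Re c-\gamma),
\]
from which
\[
|\omega(z)|<1\ \Longleftrightarrow\ (\Re g(z)-\gamma)(\Re g(0)-\gamma)>0 .
\]
Moreover, whenever this product is positive the denominator $w-2\gamma+\overline{c}$ is forced to be nonzero, so $\omega$ is automatically holomorphic there. Thus $|\omega(z)|<1$ says precisely that $\Re g(z)-\gamma$ has the same nonzero sign as $\Re g(0)-\gamma$.

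For the direct implication I would assume $\Re g(z)>\gamma$ for all $z$; then in particular $\Re g(0)>\gamma$, both factors are positive, and $\omega\in\Omega$. The converse is where the only genuine care is needed: membership $\omega\in\Omega$ yields, via the identity, merely that $\Re g-\gamma$ keeps a constant nonzero sign on $\D$ equal to that of $\Re g(0)-\gamma$, and one must still exclude the spurious branch $\Re g<\gamma$. This is the point I expect to be the main obstacle, and it is settled by the base-point normalization $\Re g(0)>\gamma$ (e.g.\ $g(0)=1>\gamma$): geometrically $T^{-1}(\D)$ is exactly the half-plane containing $g(0)$, and holomorphy of $\omega$ already guarantees $\Re g(0)\neq\gamma$, since the denominator cannot vanish at $z=0$. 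With $\Re g(0)>\gamma$ fixed, $|\omega|<1$ then forces $\Re g(z)>\gamma$ throughout, completing the equivalence.
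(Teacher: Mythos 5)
Your argument is correct, and it is exactly the ``simple geometric considerations'' the paper invokes without writing them out: $\omega=T\circ g$ for the M\"obius map $T$ carrying the half-plane $\{\Re w>\gamma\}$ onto $\D$ with $T(g(0))=0$, verified via the identity $\left|w-2\gamma+\overline{c}\right|^2-\left|w-c\right|^2=4(\Re w-\gamma)(\Re c-\gamma)$, which also forces the denominator to be nonvanishing. Your two caveats are genuine and worth recording: as printed, \eqref{Psi} lacks the conjugate on the $g(0)$ in the denominator, so the stated equivalence only holds when $g(0)$ is real; and the ``if'' direction additionally needs the normalization $\Re g(0)>\gamma$, since $\omega\in\Omega$ by itself only pins $\Re g-\gamma$ to the (possibly negative) sign of $\Re g(0)-\gamma$ --- e.g.\ $g\equiv 0$, $\gamma=1$ gives $\omega\equiv 0\in\Omega$ with $\Re g<\gamma$. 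Both defects are harmless in every application in the paper, where $g(0)=1$ and $\gamma<1$, and your proof handles them correctly.
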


\begin{lemma}[see, for example, \cite{Ke-Me}]\label{lem-FS-Psi}
Let $\omega\in\Omega, \ \omega(z)=\sum\limits_{k=1}^{\infty} b_k z^k$. Then  $|b_2| \leq 1-|b_1|^2.$ Consequently, $|b_2-sb_1^2|\leq \max\left( 1,|s| \right)$ for every  $s \in \C$.
\end{lemma}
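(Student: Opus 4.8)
The plan is to reduce the lemma to a single coefficient estimate for bounded analytic functions and then obtain the quadratic bound by an elementary triangle-inequality argument. Since $\omega\in\Omega$ is a holomorphic self-map of $\D$ with $\omega(0)=0$, the auxiliary function $g(z):=\omega(z)/z=b_1+b_2z+\cdots$ is holomorphic on $\D$, and by the Schwarz lemma $|\omega(z)|\le|z|$, so $g$ maps $\D$ into $\overline{\D}$; in particular $|b_1|=|g(0)|\le1$. Note also $g(0)=b_1$ and $g'(0)=b_2$, so that the first assertion $|b_2|\le1-|b_1|^2$ is exactly the Schwarz--Pick estimate for $g$ evaluated at the origin.

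First I would establish $|b_2|\le1-|b_1|^2$. If $|b_1|=1$, then $g$ attains the maximum modulus $1$ at the interior point $0$, hence by the maximum principle $g$ is a unimodular constant, forcing $b_2=0$ and turning the inequality into the equality $0=1-1$. If $|b_1|<1$, I would compose $g$ with the disk automorphism $w\mapsto\dfrac{w-b_1}{1-\overline{b_1}\,w}$ to form $\phi(z):=\dfrac{g(z)-b_1}{1-\overline{b_1}\,g(z)}$, which is holomorphic, maps $\D$ into $\overline{\D}$, and satisfies $\phi(0)=0$. Applying the Schwarz lemma to $\phi$ gives $|\phi'(0)|\le1$; a direct differentiation yields $\phi'(0)=\dfrac{g'(0)}{1-|b_1|^2}=\dfrac{b_2}{1-|b_1|^2}$, and the bound $|\phi'(0)|\le1$ is precisely $|b_2|\le1-|b_1|^2$.

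To deduce the consequence, I would use the triangle inequality together with $|b_2|\le1-|b_1|^2$ and $|b_1|\le1$. For any $s\in\C$,
\[
|b_2-sb_1^2|\le|b_2|+|s|\,|b_1|^2\le(1-|b_1|^2)+|s|\,|b_1|^2=1+(|s|-1)|b_1|^2.
\]
Splitting into two cases finishes the argument: if $|s|\le1$ the factor $|s|-1$ is nonpositive, so the right-hand side is at most $1$; if $|s|\ge1$ it is nonnegative and, since $|b_1|^2\le1$, the right-hand side is at most $1+(|s|-1)=|s|$. In both cases $|b_2-sb_1^2|\le\max(1,|s|)$, as claimed.

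There is no genuinely hard step here, the statement being a classical Schwarz-lemma fact; the only point demanding care is the degenerate case $|b_1|=1$, where the automorphism defining $\phi$ collapses and one must instead invoke the maximum principle to conclude that $\omega$ is a rotation and hence $b_2=0$.
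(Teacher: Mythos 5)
Your proof is correct. The paper does not actually prove this lemma---it cites it from Keogh--Merkes---and your argument (the Schwarz--Pick estimate for $g(z)=\omega(z)/z$ at the origin, obtained by composing with a disk automorphism, followed by the triangle inequality and a two-case analysis in $|s|$) is precisely the standard one, with the degenerate case $|b_1|=1$ handled correctly via the maximum principle.
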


The following assertion was stated by Jack in \cite{Jack} and is known as Jack’s lemma or the Clunie--Jack lemma.
\begin{lemma}\label{lem-Jack}
Let $\omega\in \Omega$. If $|\omega(z)|$ admits its maximum value on the circle $|z| = r$ at a point $z_0$, then $z_0\omega'(z_0) = k \omega(z_0),$ where $k\ge 1$.
\end{lemma}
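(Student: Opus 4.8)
The plan is to reduce the claim to a boundary version of the Schwarz lemma via rescaling. Fix the radius $r\in(0,1)$ appearing in the statement and set $M:=|\omega(z_0)|=\max_{|z|=r}|\omega(z)|$. Since $\omega\in\Omega$ is a nonconstant self-map of $\D$ vanishing at the origin, the maximum modulus principle guarantees $M>0$ and that $|\omega|\le M$ on the whole closed disk $|z|\le r$. I would then introduce the normalized function $\psi(z):=\omega(rz)/M$, which is holomorphic on a neighbourhood of $\overline{\D}$ (because $r<1$), satisfies $\psi(0)=0$ and $|\psi(z)|\le1$ on $\overline{\D}$, and attains $|\psi(z_1)|=1$ at the boundary point $z_1:=z_0/r$. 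A direct computation gives $\dfrac{z_1\psi'(z_1)}{\psi(z_1)}=\dfrac{z_0\omega'(z_0)}{\omega(z_0)}$, so it suffices to prove that this quantity is real and at least $1$.

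First I would extract that it is real and nonnegative from the extremality of $z_1$. Since $|\psi|\le1=|\psi(z_1)|$ on $\overline{\D}$, the point $z_1$ is a global maximum of $|\psi|$. Differentiating $|\psi(e^{it}z_1)|^2$ tangentially at $t=0$ forces $\Im\bigl(z_1\psi'(z_1)\overline{\psi(z_1)}\bigr)=0$, while the one-sided radial derivative of $|\psi(sz_1)|^2$ at $s=1^-$ being nonnegative forces $\Re\bigl(z_1\psi'(z_1)\overline{\psi(z_1)}\bigr)\ge0$. Together with $|\psi(z_1)|=1$ this shows that $k:=z_0\omega'(z_0)/\omega(z_0)$ is real and $k\ge0$.

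The crux is upgrading $k\ge0$ to $k\ge1$, and here I would use the Schwarz lemma to peel off the zero at the origin. Because $\psi(0)=0$, the function $G(z):=\psi(z)/z$ is holomorphic on $\D$ and, by the Schwarz lemma, maps $\D$ into $\overline{\D}$; moreover $G$ is holomorphic near $z_1$ with $|G(z_1)|=|\psi(z_1)|/|z_1|=1$, so $z_1$ is again a boundary maximum of $|G|$. Writing $\psi'=G+zG'$ yields the identity $k=\dfrac{z_1\psi'(z_1)}{\psi(z_1)}=1+\dfrac{z_1G'(z_1)}{G(z_1)}$, and the very same tangential and radial argument applied to $G$ gives $z_1G'(z_1)/G(z_1)\ge0$. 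Hence $k\ge1$, which is exactly the assertion.

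I expect the main obstacle to be precisely this last upgrade: the boundary derivative estimates by themselves only deliver $k\ge0$, and it is the observation that $\psi$ vanishes at the origin — allowing the Schwarz factor $G=\psi/z$ to be divided out while preserving the bound $|G|\le1$ — that supplies the extra unit and hence $k\ge1$. The remaining technical points to verify are that $\psi$, and therefore $G$, is genuinely analytic across $|z|=1$, which holds because $\omega$ is analytic on all of $\D$ and $r<1$, and that $\omega(z_0)\ne0$, which is immediate from $M>0$.
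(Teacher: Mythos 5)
Your proof is correct. The paper does not actually prove this lemma --- it quotes it from Jack's original article --- so there is no internal argument to compare against; your route (rescale to a boundary maximum of a self-map of $\overline{\D}$, use the tangential and radial derivatives of $|\psi|^2$ at the extremal point to get that $z_1\psi'(z_1)\overline{\psi(z_1)}$ is real and nonnegative, then gain the extra unit by dividing out the Schwarz factor $G(z)=\psi(z)/z$ and repeating the same boundary argument for $G$) is the standard proof and is complete, including the justification that $\psi$ and $G$ are analytic across $|z|=1$ because $r<1$. The only implicit hypothesis worth flagging is $\omega\not\equiv 0$, which you need for $M>0$ and which is tacitly assumed in the statement as used in the paper.
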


\medskip

\section{Stratification by  values of the Fekete--Szeg\"{o} functional }\label{sect-calcul}
\setcounter{equation}{0}
Let $\alpha,\beta \in \R$ and $f\in\A$. It turns out that one can use the range of the expression
\begin{equation}\label{g}
g_{\alpha,\beta}(z)=\alpha\frac{f(z)}{z}+\beta\frac{zf'(z)}{f(z)}+(1-\alpha-\beta)\left(1+\frac{zf''(z)}{f'(z)}\right)
\end{equation}
to characterize certain properties of $f$ that answer the questions posed in Section~\ref{sect-intro}.

The next theorem gives a sufficient condition on the range $g_{\alpha,\beta}(\D)$ that ensures $f$ is a generator, that is, implies $\Re[f(z)/z]>0$ for $z\in\D.$
To formulate it we need the following notations:
\begin{eqnarray*}
% \nonumber % Remove numbering (before each equation)
\Delta_1&:=& \left\{w=x+iy:\ x\le 1+\beta-\alpha,\ y^2\le(x-1-\beta)^2 -\alpha^2 \right\},  \\
\Delta_2&:=& \left\{w=x+iy:\ x\ge 1+\beta-\alpha,\ y^2\le(x-1-\beta)^2 -\alpha^2 \right\}.
\end{eqnarray*}

\begin{theorem}\label{thm-N1}
 Let $\alpha,\beta\in\R$. Denote
  \[
 \Delta:=\left\{
 \begin{array}{c}
   \C \setminus\Delta_1 ,  \quad \text{if} \quad \alpha \geq 0,\vspace{2mm} \\
    \C \setminus\Delta_2 ,  \quad \text{if} \quad \alpha < 0.
 \end{array}
 \right.
 \]
If $f\in\A$ and $g_{\alpha,\beta}(\D) \subseteq \Delta, $ then $f\in\G$.
\end{theorem}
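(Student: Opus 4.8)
The plan is to reduce Theorem~\ref{thm-N1} to the admissibility criterion of Lemma~\ref{lem-mm-e}. Put $p(z):=f(z)/z$; then $p\in\Hol(\D,\C)$, $p(0)=1$, and by the Berkson--Porta description noted after Theorem~\ref{teorA} the assertion $f\in\G$ is exactly $\Re p(z)>0$ on $\D$. Since $f=zp$ gives $f'=p+zp'$ and $f''=2p'+zp''$, one has
\[
\frac{f}{z}=p,\qquad \frac{zf'}{f}=1+\frac{zp'}{p},\qquad 1+\frac{zf''}{f'}=1+\frac{2zp'+z^2p''}{p+zp'},
\]
so that $g_{\alpha,\beta}(z)=\psi\bigl(p(z),zp'(z),z^2p''(z);z\bigr)$ with
\[
\psi(r,s,t;z)=\alpha r+\beta\Bigl(1+\frac{s}{r}\Bigr)+(1-\alpha-\beta)\Bigl(1+\frac{2s+t}{r+s}\Bigr).
\]
I would first record that, since $g_{\alpha,\beta}(0)=1$, the hypothesis $g_{\alpha,\beta}(\D)\subseteq\Delta$ is consistent, and that it forces $p$ and $f'=p+zp'$ to be zero-free whenever the corresponding coefficients do not vanish, so that $\psi$ is genuinely evaluated along the path of $p$.

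Taking $\Omega:=\Delta$, the hypothesis reads $\psi(p(z),zp'(z),z^2p''(z);z)\in\Omega$ for $z\in\D$. By Lemma~\ref{lem-mm-e} it therefore suffices to verify the admissibility condition: for all real $\rho,\sigma,\mu,\nu$ subject to $\sigma\le-\tfrac{1+\rho^2}{2}$ and $\sigma+\mu\le0$, the point $\psi(\rho i,\sigma,\mu+\nu i)$ must lie outside $\Omega$, i.e.\ inside $\C\setminus\Delta$, which is $\Delta_1$ when $\alpha\ge0$ and $\Delta_2$ when $\alpha<0$. A direct separation of real and imaginary parts (writing $D=\sigma^2+\rho^2$) gives
\[
\Re\psi(\rho i,\sigma,\mu+\nu i)=1-\alpha+(1-\alpha-\beta)\frac{(2\sigma+\mu)\sigma+\nu\rho}{D},
\]
\[
\Im\psi(\rho i,\sigma,\mu+\nu i)=\alpha\rho-\frac{\beta\sigma}{\rho}+(1-\alpha-\beta)\frac{\nu\sigma-(2\sigma+\mu)\rho}{D}.
\]

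The core of the argument is then purely geometric: writing $x=\Re\psi$, $y=\Im\psi$, I must check the two inequalities defining $\Delta_1$ (resp.\ $\Delta_2$), namely $x\le1+\beta-\alpha$ (resp.\ $x\ge1+\beta-\alpha$) together with $y^2\le(x-1-\beta)^2-\alpha^2$, under the constraints above. The plan is to exploit $\sigma\le-\tfrac{1+\rho^2}{2}$ (which yields $\sigma<0$ and $\sigma^2\ge\rho^2$) and $2\sigma+\mu\le\sigma$ to bound these expressions, and to read off the boundary hyperbola $(x-1-\beta)^2-y^2=\alpha^2$ as the envelope of the admissible values; the sign of $\alpha$ should be exactly what selects the left branch (giving $\Delta_1$) or the right branch (giving $\Delta_2$), matching the case split in the statement. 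Once the admissibility inclusion is established, Lemma~\ref{lem-mm-e} delivers $\Re p>0$, hence $f\in\G$.

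The step I expect to be the main obstacle is this admissibility verification, and specifically the control of the free imaginary part $\nu$ of $z^2p''$: since $\nu$ is unconstrained, the inequality $y^2\le(x-1-\beta)^2-\alpha^2$ has to be arranged to survive uniformly in $\nu$, and the dependence of both $x$ and $y$ on $\nu$ is affine, so that the quantity $(x-1-\beta)^2-y^2$ is quadratic in $\nu$; reconciling this with membership in $\Delta_1,\Delta_2$ is where the hyperbolic (rather than half-plane or wedge) shape of the excluded region must do the work. I would treat the degenerate configurations separately --- $\rho=0$ (where the term $\beta s/r$ needs a limiting argument) and any point where $r+s=p+zp'$ vanishes --- and check that the constraint $\sigma+\mu\le0$ is used sharply at the extreme $2\sigma+\mu=\sigma$, where the envelope is attained.
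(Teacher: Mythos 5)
Your reduction to Lemma~\ref{lem-mm-e} via $p(z)=f(z)/z$, the identification of $\psi$, and the formulas for $X=\Re\psi(\rho i,\sigma,\mu+\nu i)$ and $Y=\Im\psi(\rho i,\sigma,\mu+\nu i)$ coincide with the paper's setup. But the proposal stops exactly where the proof has to begin: the admissibility verification is only announced as a ``plan'' and flagged as ``the main obstacle,'' so the central step is missing. Worse, the plan as you state it --- show that every admissible value $\psi(\rho i,\sigma,\mu+\nu i)$ lies pointwise in $\Delta_1$ (resp.\ $\Delta_2$) --- cannot be completed. For fixed $\rho\neq0$, $\sigma$, $\mu$, both $X$ and $Y$ are affine in $\nu$, with $\partial X/\partial\nu=(1-\alpha-\beta)\rho/(\sigma^2+\rho^2)$ and $\partial Y/\partial\nu=(1-\alpha-\beta)\sigma/(\sigma^2+\rho^2)$; hence when $1-\alpha-\beta\neq0$ the point $(X,Y)$ sweeps out an entire straight line of slope $\sigma/\rho$ as $\nu$ runs over $\R$. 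The quantity $(X-1-\beta)^2-Y^2$ is then a quadratic in $\nu$ whose leading coefficient is $(1-\alpha-\beta)^2(\rho^2-\sigma^2)/(\sigma^2+\rho^2)^2\le0$, because $\sigma^2\ge\bigl(\tfrac{1+\rho^2}{2}\bigr)^2\ge\rho^2$; whenever $\sigma^2>\rho^2$ it tends to $-\infty$ as $\nu\to\pm\infty$, so the defining inequality $Y^2\le(X-1-\beta)^2-\alpha^2$ of $\Delta_1$ (and of $\Delta_2$) fails for all large $|\nu|$. The pointwise inclusion you intend to verify is therefore false, and no exploitation of the ``hyperbolic shape'' of the excluded region, nor the sharp use of $2\sigma+\mu\le\sigma$, can rescue it.

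The paper's proof is structurally different precisely at this point: it does not try to place each admissible value inside $\Delta_1$. Instead, for each fixed $(\rho,\sigma)$ it extracts from the relation $\frac{Y-\alpha\rho+\beta\sigma/\rho}{X-(1-\alpha)}\neq\frac{\sigma}{\rho}$ a whole line $Y=\frac{\sigma}{\rho}\,(X+\alpha-1-\beta)+\alpha\rho$ that the values $\psi(\rho i,\sigma,\mu+\nu i)$ never meet, and then analyses how the family of these \emph{avoided} lines, as $(\rho,\sigma)$ ranges over the admissible cone, relates to the forbidden region (this is where the case split on the sign of $\alpha$ and the bound $|Y_{\rho,\sigma}|^2\ge k(k-2\alpha)$ enter). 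If you want to complete your argument you must pass to this dual formulation, or some equivalent of it; as written, your key step is both unexecuted and, in the direct form you describe, unprovable.
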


\begin{proof}
Let $f\in \A$. Consider the functions $p(z)=\frac{f(z)}{z}$, $p(0)=1,$ and
\[\psi(r,s,t;z)=1-\alpha+\alpha r+\beta\frac{s}{r}+(1-\alpha-\beta)\frac{t+2s}{r+s},\]
such that $\psi(p(z),z p'(z),z^2 p''(z);z)=g_{\alpha,\beta}(z).$

So, according to Lemma \ref{lem-mm-e}, in order to prove $f\in\G$, or equivalently $\operatorname{Re} p(z)>0$, $z\in\D$, it is enough to show that
\begin{equation}\label{psi111}
 \psi(\rho i,\sigma,\mu+\nu i;z) \notin\Delta\quad (z\in\D)
\end{equation}
when  $\rho,\sigma,\mu,\nu\in \mathbb{R}$, $\sigma\le-\frac{1+\rho^2}{2\mathbb{}}$, $\sigma+\mu\le0$.

We start with
\[
\begin{split}
X &= \operatorname{Re} \psi(\rho i,\sigma,\mu+\nu i;z) = 1-\alpha +(1-\alpha-\beta)\frac{\nu\rho+\sigma(\mu+2\sigma)}{\sigma^2+\rho^2},\\
Y &= \operatorname{Im} \psi(\rho i,\sigma,\mu+\nu i;z) = \alpha\rho - \beta\frac{\sigma}{\rho} +(1-\alpha-\beta)\frac{\nu\sigma-\rho(\mu+2\sigma)}{\sigma^2+\rho^2}.
\end{split}
\]
Since  $\nu$ takes all real values, the expression
\[ \frac{Y-\alpha\rho+\beta\frac{\sigma}{\rho}}{X-(1-\alpha)} = \frac{\nu\rho+\sigma(\mu+2\sigma)}{\nu\sigma-\rho(\mu+2\sigma)} = \frac{\sigma}{\rho}-\frac{2\sigma+\mu}{\rho}\cdot \frac{\sigma^2+\rho^2}{\nu\rho+\sigma(\mu+2\sigma)}
\]
takes all real values except $\frac{\sigma}{\rho}$. Therefore, denoting $k:=X+\alpha-1-\beta$, we have
\[
 Y \neq \left[ \frac{\sigma}{\rho^2}k +\alpha\right]\rho =: Y_{\rho,\sigma}(k)
 \]
when  $\rho\in \R$ and $\sigma\le -\frac{1+\rho^2}{2}$. Condition \eqref{psi111} will follow as we show that each point of $\Omega$ lies on the graph of $Y_{\rho, \sigma}$ for some real  $\rho$ and $\sigma\leq -\frac{1+\rho^2}{2}$.

 We now analyse  the range of $Y_{\rho,\sigma}$ first in the case when $\alpha\ge0$, and then when $\alpha<0$.

(i) Let $\alpha\ge0$. If $k>0$, then $Y_{\rho,\sigma}(k)$ covers all real values since for any real $y$, we can choose
$\rho = \frac{y\pm\sqrt{y^2-4\alpha \sigma k}}{2\alpha}$, so that  $\left(\frac{\sigma}{\rho^2}k +\alpha\right)\rho=y$.

If $k\le0$, then
\[
\begin{split}
 |Y_{\rho,\sigma}(X)|
 &= \left( \frac{\sigma}{\rho^2}k +\alpha\right)|\rho| \ge \left(-\frac{1+\rho^2}{2\rho^2}k +\alpha\right)|\rho| \\
 &\ge  \sqrt{k(k-2\alpha)} =  \sqrt{(X-1-\beta)^2-\alpha^2},
\end{split}
\]
where we minimize the function $\left(-\frac{1+\rho^2}{2\rho^2}k +\alpha\right)|\rho|$ with respect to $\rho$. Thus, when $\alpha\ge0$ and $k\le0$, $|Y_{\rho,\sigma}|^2$ takes all real values greater or equal to $k(k-2\alpha)=(X-1-\beta)^2-\alpha^2$.

Therefore for $\alpha\ge0$, the union of graphs of $Y_{\rho,\sigma}(k)$ is in $\C\setminus \Omega_1$ for all real $\rho$ and $\sigma\leq -\frac{1+\rho^2}{2}$.

(ii) Now, let $\alpha<0$. If $k<0$, as in the first part of (i), one concludes that $Y_{\rho,\sigma}(X)$ takes all real values.
If $k\ge0$, then similarly to the second part of (i), we have
\[
\begin{split}
 |Y_{\rho,\sigma}(k)|
 &= -\left( \frac{\sigma}{\rho^2}k +\alpha\right)|\rho| \ge \left(\frac{1+\rho^2}{2\rho^2}k -\alpha\right)|\rho| \\
 &\ge \sqrt{k(k-2\alpha)} =  \sqrt{(X-1-\beta)^2-\alpha^2},
\end{split}
\]
and $|Y_{\rho,\sigma}|^2$ takes all real values greater than or equal to $ k(k-2\alpha)=(X-1-\beta)^2-\alpha^2$.
Thus, for $\alpha<0$,  the union of graphs of $Y_{\rho,\sigma}(k)$ is in $\C\setminus \Delta_2$ for all real $\rho$ and $\sigma\leq -\frac{1+\rho^2}{2}$.

We complete the proof by combining (i) and (ii) and applying Lemma~\ref{lem-mm-e}.
\end{proof}

\begin{remar}
It is worth mentioning that the inclusion $g_{\alpha,\beta}(\D) \subseteq \Delta, $  in the above theorem, as well as, in the next corollary implies $1 \in \Delta$.
 Therefore, in the case when $\alpha\ge0$, necessarily $\beta<\alpha$, and when $\alpha<0$, necessarily $\beta>\alpha$.
\end{remar}

From the geometry of the regions $\Delta_1$ and $\Delta_2$, we get the following

\begin{corol}\label{cor-M-g}
 Let $\alpha,\beta\in\R$ and $f\in\A$. If either
$\alpha\ge0$ and $$\Re g_{\alpha,\beta}(z) > 1+\beta-\alpha \quad  (z\in\D),$$ or
$\alpha<0$ and  $$\Re g_{\alpha,\beta}(z) < 1+\beta-\alpha \quad  (z\in\D),$$
   then $f\in\G$.
\end{corol}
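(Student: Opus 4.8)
The plan is to deduce the corollary directly from Theorem~\ref{thm-N1}: in each of the two cases I will verify that the stated half-plane condition already forces the range inclusion $g_{\alpha,\beta}(\D)\subseteq\Delta$, after which the theorem yields $f\in\G$ with nothing further to prove. The whole argument rests on a single elementary geometric observation, read straight off the definitions, namely that $\Delta_1$ lies entirely in the closed half-plane to the left of the vertical line $\Re w=1+\beta-\alpha$, while $\Delta_2$ lies entirely to its right.

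For the case $\alpha\ge0$ I will note that, by the defining inequalities, every point $w=x+iy\in\Delta_1$ satisfies $x\le 1+\beta-\alpha$. Hence $\Delta_1$ is disjoint from the open half-plane $H^{+}:=\{w:\Re w>1+\beta-\alpha\}$, and therefore $H^{+}\subseteq\C\setminus\Delta_1=\Delta$. The hypothesis $\Re g_{\alpha,\beta}(z)>1+\beta-\alpha$ for all $z\in\D$ says precisely that $g_{\alpha,\beta}(\D)\subseteq H^{+}\subseteq\Delta$, so Theorem~\ref{thm-N1} applies and gives $f\in\G$.

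The case $\alpha<0$ is completely symmetric. Here every point $w=x+iy\in\Delta_2$ satisfies $x\ge 1+\beta-\alpha$, so $\Delta_2$ is disjoint from the open half-plane $H^{-}:=\{w:\Re w<1+\beta-\alpha\}$, whence $H^{-}\subseteq\C\setminus\Delta_2=\Delta$. The assumption $\Re g_{\alpha,\beta}(z)<1+\beta-\alpha$ places $g_{\alpha,\beta}(\D)$ inside $H^{-}$, so once more $g_{\alpha,\beta}(\D)\subseteq\Delta$ and Theorem~\ref{thm-N1} yields $f\in\G$.

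I expect no genuine obstacle: the content reduces to matching the defining inequality of $\Delta_1$ (resp.\ $\Delta_2$) against the respective hypothesis, since the $x$-condition in each definition is exactly the complementary half-plane bound. The one point worth flagging is that the half-plane hypotheses are \emph{strict}; this strictness is what lets $g_{\alpha,\beta}(\D)$ avoid the boundary line $\Re w=1+\beta-\alpha$, which does meet each of $\Delta_1$ and $\Delta_2$ (precisely at the real point $w=1+\beta-\alpha$, where the constraint $y^2\le(x-1-\beta)^2-\alpha^2$ collapses to $y=0$). Thus the open half-planes $H^{+}$ and $H^{-}$ are genuinely contained in $\Delta$, and the reduction to Theorem~\ref{thm-N1} is valid.
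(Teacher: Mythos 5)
Your argument is correct and is exactly the paper's intended reasoning: the paper derives the corollary from Theorem~\ref{thm-N1} with the one-line remark ``from the geometry of the regions $\Delta_1$ and $\Delta_2$,'' and you have simply written out that geometry (the half-plane containments and the role of strictness at the single boundary point $w=1+\beta-\alpha$) in full. Nothing to correct.
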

%----------------------------------{\bf end new version}-----------------------

\medskip

In what follows we will be interested in another classes defined by the range of $g_{\alpha,\beta}$. Namely,

\begin{defin}\label{M_mu}
Class $M_{\alpha,\beta}$ is defined by
\begin{equation*}\label{class }
M_{\alpha,\beta}=\left\{f \in \A: \Re g_{\alpha,\beta}(z) > \frac{\alpha+\beta}{2} \right\}.
\end{equation*}
\end{defin}

According to this definition,
\begin{itemize}
  \item for $\alpha+\beta\geq 2$ we have $M_{\alpha,\beta}=\emptyset$;
  \item $M_{1,0}= \mathcal{G}$;
  \item $M_{0,\beta}$ is the class of $(1-\beta)$-convex functions of order $\frac{\beta}{2}$ (see, for example,  \cite{Al-A, Fu}). In particular, $M_{0,1}=S^*\left(\frac{1}{2}\right)$ and $M_{0,0}=\mathcal{C}$.
\end{itemize}

\begin{remar}
For $\alpha\ge 0$ and $\beta \le 3\alpha-2$, we have $\frac{\alpha+\beta}{2}\ge 1+\beta-\alpha$. Thus $M_{\alpha,\beta} \subset \mathcal{G}$ by Corollary \ref{cor-M-g}.
\end{remar}

The following theorem stratifies the range of $(\alpha,\beta)$ according to the values of the Fekete--Szeg\"{o} functional $\Phi(\cdot, \lambda)$.
\begin{theorem}\label{th-FS-estim}
Let $\alpha, \beta \in\R $ satisfy $\alpha+\beta<2$ and $5\alpha+4\beta<6$. Denote
\begin{equation*}\label{mu-lambda}
\mu:=\frac{2-\alpha-\beta}{6-5\alpha-4\beta}>0.
\end{equation*}
Then $|\Phi(f, \lambda)| \leq \max \left(\mu,|1-\lambda| \right)$,  $\lambda \in \C,$ over the class $M_{\alpha,\beta}$.
\end{theorem}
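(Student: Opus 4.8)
The plan is to reduce the Fekete--Szeg\"{o} functional $\Phi(f,\lambda)=f_3-\lambda f_2^2$ to the first two Taylor coefficients of a Schwarz function, and then close the estimate with Lemma~\ref{lem-FS-Psi}. Writing $f(z)=z+f_2z^2+f_3z^3+\cdots$, the first step is to expand $g_{\alpha,\beta}$ to second order. Computing the three building blocks $f(z)/z$, $zf'(z)/f(z)$ and $1+zf''(z)/f'(z)$ separately and taking the prescribed linear combination gives $g_{\alpha,\beta}(0)=1$ together with
\[
g_{\alpha,\beta}(z)=1+(2-\alpha-\beta)f_2\,z+\bigl[(6-5\alpha-4\beta)f_3+(4\alpha+3\beta-4)f_2^2\bigr]z^2+\cdots.
\]

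Since $f\in M_{\alpha,\beta}$ means exactly $\Re g_{\alpha,\beta}(z)>\frac{\alpha+\beta}{2}$, I would apply Lemma~\ref{lemm-Psi} with $\gamma=\frac{\alpha+\beta}{2}$ and $g=g_{\alpha,\beta}$ to produce a Schwarz function
\[
\omega(z)=\frac{g_{\alpha,\beta}(z)-1}{g_{\alpha,\beta}(z)-(\alpha+\beta)+1}=:b_1z+b_2z^2+\cdots\in\Omega.
\]
Writing $g_{\alpha,\beta}(z)-1=c_1z+c_2z^2+\cdots$ and setting $A:=2-\alpha-\beta$ (positive by hypothesis), the denominator is $A+c_1z+c_2z^2+\cdots$, so comparing coefficients in $\omega\cdot(A+c_1z+\cdots)=c_1z+c_2z^2+\cdots$ yields $b_1=c_1/A$ and $b_2=c_2/A-c_1^2/A^2$. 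Substituting $c_1=Af_2$ and the bracketed formula for $c_2$, the factor $A$ cancels cleanly, and after combining the $f_2^2$-terms one obtains the two clean identities
\[
b_1=f_2,\qquad b_2=\frac{6-5\alpha-4\beta}{2-\alpha-\beta}\,(f_3-f_2^2)=\frac{1}{\mu}\,(f_3-f_2^2).
\]

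With these in hand the conclusion is immediate. From $b_1=f_2$ and $f_3=\mu b_2+b_1^2$ I would write
\[
\Phi(f,\lambda)=f_3-\lambda f_2^2=\mu b_2+(1-\lambda)b_1^2=\mu\Bigl(b_2-\tfrac{\lambda-1}{\mu}\,b_1^2\Bigr),
\]
and then invoke Lemma~\ref{lem-FS-Psi} with $s=\frac{\lambda-1}{\mu}$ to get $|\Phi(f,\lambda)|=\mu\,|b_2-s\,b_1^2|\le\mu\max(1,|s|)=\max(\mu,|1-\lambda|)$. I expect the only genuine work to be the bookkeeping in the middle step: verifying that the Taylor coefficients of $\omega$ collapse to $b_1=f_2$ and that the $f_2^2$-contributions to $b_2$ recombine into the stated multiple of $f_3-f_2^2$. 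Everything else is direct substitution. Note that $\alpha+\beta<2$ is what makes $A>0$, so that Lemma~\ref{lemm-Psi} applies and the passage to $\omega$ is legitimate, while $5\alpha+4\beta<6$ together with $\alpha+\beta<2$ guarantees $\mu>0$ and fixes the sign of the factor relating $b_2$ to $f_3-f_2^2$.
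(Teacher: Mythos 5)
Your proposal is correct and follows essentially the same route as the paper: pass from $\Re g_{\alpha,\beta}>\frac{\alpha+\beta}{2}$ to a Schwarz function $\omega$ via Lemma~\ref{lemm-Psi} and then apply Lemma~\ref{lem-FS-Psi}. Your coefficient bookkeeping is in fact the more careful one — the correct relation is $b_2=\frac{1}{\mu}(f_3-f_2^2)$ as you derive (the paper's displayed expansion of $\omega$ writes $\mu(a_3-a_2^2)$ where $\frac{1}{\mu}(a_3-a_2^2)$ is meant, though its final chain with $\lambda=1+s\mu$ is consistent with your value).
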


\begin{proof}
Let $f \in M_{\alpha,\beta}$ has  the Taylor expansion $f(z)=z+\sum\limits_{k=2}^\infty a_k z^k$.
Then
$$f'(z)=1+2a_2z+3a_3z^2+\ldots$$
and
$$f''(z)=2a_2+6a_3z+12a_4z^2+\ldots.$$
Construct the function $g_{\alpha,\beta}$ by formula \eqref{g} and substitute  it in formula \eqref{Psi}. One can see that
\begin{equation*}\label{psi-ext}
\omega(z)=a_2z+(a_3-a_2^2)\mu z^2+ z^3\sum_{k=3}^{\infty}b_k z^{k-1}.
\end{equation*}
Now, for every $s \in \C$ we have
\begin{eqnarray*}\label{psi-ext}
% \nonumber % Remove numbering (before each equation)
\left|b_2-sb_1^2 \right|= \left|\mu( a_3-a_2^2)-s a_2^2\right| = \mu \cdot\left|a_3-\left(1+\frac{s}{\mu}\right)a_2^2\right|.
\end{eqnarray*}
Denoting
\begin{equation}\label{mu-lambda}
 \lambda:= 1+s\mu,
\end{equation}
we get from Lemma~\ref{lem-FS-Psi} that
\begin{equation*}\label{FS-f1}
\left|a_3-\lambda a_2^2\right|=|\mu| \cdot \left|b_2-sb_1^2 \right|\leq  \max (\mu,\mu |s|).
\end{equation*}
 Finally, by \eqref{mu-lambda}, the  result follows.
\end{proof}

In the connection to this theorem, we note that the level sets of the function $\mu(\alpha,\beta)=\frac{2-\alpha-\beta}{6-5\alpha-4\beta}$ are rays starting at the point $(-2,4)$ and lying under the lines $\alpha+\beta=2$ and $5\alpha+4\beta=6.$ This partly explains the direction of our further research.

\section{%The case $\alpha=0$, that is, $\mu\in(0,\frac12]$, or
filtration of Mocanu's type}\label{sect-Mocanu_class}
\setcounter{equation}{0}

 In this section we concentrate on the case $\alpha=0$ and consider the classes $M_{0,\beta}$ consisting of functions $f$ from $\A$ that satisfy
\[
\Re\left[\beta\frac{zf'(z)}{f(z)}+(1-\beta)\left(1+\frac{zf''(z)}{f'(z)}\right)\right] >\frac\beta2\,, \ \quad  z\in\D,
\]
cf. formula~\eqref{g} and Definition~\ref{M_mu}. Such functions are called $(1-\beta)$-convex functions of order~$\frac\beta2$. They were at first introduced by Mocanu \cite{M-69} and then studied by many authors, see, for example, \cite{Al-A, Fu, MMR, R-D-2004}.

We start with a representation of  the elements of $M_{0,\beta}.$

\begin{lemma}
Let $\beta<2$ and $\beta \neq 1$. Then, $f\in M_{0,\beta}$ if, and only if, the function $g$ defined by
\begin{equation}\label{g-gef}
g(z)=z \left(f'(z)\right)^{\frac{2-2\beta}{2-\beta}} \left(\frac{f(z)}{z}\right)^{\frac{2\beta}{2-\beta}}
\end{equation}
is starlike.
In this case
\begin{equation}\label{f-from-g}
f(z)=\left[ \frac{1}{1-\beta} \int_{0}^{z} w^{\frac{\beta}{1-\beta}}\left( \frac{g(w)}{w}\right)^{\frac{2-\beta}{2-2\beta}} dw \right]^{1-\beta}.
\end{equation}
\end{lemma}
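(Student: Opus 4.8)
The heart of the matter is a single logarithmic-derivative identity; once it is in place, both the equivalence and the inversion formula fall out by bookkeeping. Write
\[
G(z):=\beta\frac{zf'(z)}{f(z)}+(1-\beta)\left(1+\frac{zf''(z)}{f'(z)}\right),
\]
so that membership in $M_{0,\beta}$ is precisely the requirement $\Re G(z)>\frac{\beta}{2}$ on $\D$. I would start from \eqref{g-gef}, take logarithms, and differentiate: writing $\log g(z)=\log z+\frac{2-2\beta}{2-\beta}\log f'(z)+\frac{2\beta}{2-\beta}\log\frac{f(z)}{z}$ and multiplying the derivative by $z$ gives
\[
\frac{zg'(z)}{g(z)}=1+\frac{2-2\beta}{2-\beta}\,\frac{zf''(z)}{f'(z)}+\frac{2\beta}{2-\beta}\left(\frac{zf'(z)}{f(z)}-1\right).
\]
Collecting the coefficients of $\frac{zf''}{f'}$ and $\frac{zf'}{f}$ and simplifying the constant term, this reduces to the affine relation
\[
\frac{zg'(z)}{g(z)}=\frac{2}{2-\beta}\left(G(z)-\frac{\beta}{2}\right),\qquad\text{equivalently}\qquad G(z)=\frac{2-\beta}{2}\,\frac{zg'(z)}{g(z)}+\frac{\beta}{2}.
\]
This is the only computation of substance, and it is routine.

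Because $\beta<2$, the factor $\frac{2}{2-\beta}$ is strictly positive, so the two real-part conditions are equivalent pointwise: $\Re\frac{zg'}{g}>0$ on $\D$ if and only if $\Re G>\frac{\beta}{2}$ on $\D$. This settles the forward direction. If $f\in M_{0,\beta}$, then for $G$ to be defined and to have real part exceeding $\frac{\beta}{2}$ throughout $\D$, the function $f$ cannot vanish on $\D\setminus\{0\}$ and $f'$ cannot vanish on $\D$; hence the principal branches in \eqref{g-gef}, normalized to equal $1$ at the origin, define a single-valued holomorphic $g$ with $g(0)=0$ and $g'(0)=1$. The identity then yields $\Re\frac{zg'}{g}>0$, so $g\in\A$ is starlike.

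For the converse I would reverse the construction. Given a starlike $g$, define $f$ by \eqref{f-from-g}. First I check $f\in\A$: since $g(w)/w\to1$ as $w\to0$, the integrand behaves like $w^{\beta/(1-\beta)}$ near the origin and $\int_0^z w^{\beta/(1-\beta)}\,dw=(1-\beta)z^{1/(1-\beta)}(1+o(1))$, so the bracket equals $z^{1/(1-\beta)}(1+o(1))$ and $f=[\,\cdots\,]^{1-\beta}=z(1+o(1))$, giving $f(0)=0$, $f'(0)=1$. Next I verify that this $f$ reproduces the given $g$ through \eqref{g-gef}: setting $\phi:=f^{1/(1-\beta)}$, formula \eqref{f-from-g} says $\phi'(z)=\frac{1}{1-\beta}z^{\beta/(1-\beta)}\left(g(z)/z\right)^{(2-\beta)/(2-2\beta)}$, whence $f'=(1-\beta)\phi^{-\beta}\phi'$, and a short manipulation recovers exactly $g(z)=z\left(f'(z)\right)^{(2-2\beta)/(2-\beta)}\left(f(z)/z\right)^{2\beta/(2-\beta)}$. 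Applying the identity once more, $\Re G=\frac{2-\beta}{2}\Re\frac{zg'}{g}+\frac{\beta}{2}>\frac{\beta}{2}$, so $f\in M_{0,\beta}$, and \eqref{f-from-g} is thereby exhibited as the inverse of \eqref{g-gef}.

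The genuinely delicate point is not the analysis but the branch bookkeeping: because $1/(1-\beta)$ and the various exponents need not be integers, $\phi=f^{1/(1-\beta)}$ is in general multivalued, and one must check that the fractional powers combine into a single-valued holomorphic function with the stated normalization. The resolution is that the ambiguous factors cancel in the combination defining $f$ (respectively $g$), leaving $z(1+o(1))$ at the origin; fixing throughout the principal branch that equals $1$ at $0$, and using $f\neq0$ on $\D\setminus\{0\}$ and $f'\neq0$ on $\D$ (guaranteed by starlikeness of $g$ in the converse direction), makes every expression above legitimate. Once these well-definedness issues are dispatched, the whole statement reduces to the one affine identity.
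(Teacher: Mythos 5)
Your proof is correct and follows essentially the same route as the paper: the single logarithmic-differentiation identity $\frac{2-\beta}{2}\cdot\frac{zg'(z)}{g(z)}=g_{0,\beta}(z)-\frac{\beta}{2}$ is exactly the computation the paper performs, and the equivalence plus the inversion formula follow from it as you describe. Your additional care about branch selection, non-vanishing of $f$ and $f'$, and the normalization $g(0)=0$, $g'(0)=1$ only makes explicit what the paper leaves implicit.
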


\begin{proof}
Formula  \eqref{g-gef} implies that
\begin{eqnarray*}\label{g-dif}
g'(z)&\!=\!& \left(f'(z)\right)^{\frac{2-2\beta}{2-\beta}} \left(\frac{f(z)}{z}\right)^{\frac{2\beta}{2-\beta}}+
z         \frac{2-2\beta}{2-\beta}   \left(f'(z)\right)^{\frac{-\beta}{2-\beta}} f''(z)          \left(\frac{f(z)}{z}\right)^{\frac{2\beta}{2-\beta}}\\
&+& z \left(f'(z)\right)^{\frac{2-2\beta}{2-\beta}}      \frac{2\beta}{2-\beta} \left(\frac{f(z)}{z}\right)^{\frac{3\beta-2}{2-\beta}} \frac{zf'(z)-f(z)}{z^2},
\end{eqnarray*}
and hence
\begin{eqnarray*}\label{g-dif1}
\frac{z g'(z)}{g(z)}&=& 1+  \frac{2-2\beta}{2-\beta}  \frac{z f''(z)}{ f'(z)}+\frac{2\beta}{2-\beta} \left(\frac{zf'(z)}{f(z)}-1\right)\\
&=&\frac{2-3\beta}{2-\beta}+  \frac{2-2\beta}{2-\beta}  \frac{z f''(z)}{ f'(z)}+\frac{2\beta}{2-\beta}\frac{zf'(z)}{f(z)}.
\end{eqnarray*}
Thus
\begin{equation*}\label{g-star-f}
\frac{2-\beta}{ 2}\cdot\frac{z g'(z)}{g(z)}=\beta \frac{z f'(z)}{f(z)}+(1-\beta)\left(1+\frac{zf''(z)}{ f'(z)} \right)-\frac{\beta}{2}\,.
\end{equation*}

Consequently,  $f\in M_{0,\beta}$ if, and only if, $g$ is a starlike function.
Additionally, one easily sees that equalities \eqref{g-gef} and \eqref{f-from-g} are equivalent.
\end{proof}

It was proven in \cite{MMR} that every  $\beta$-convex function is starlike. Since in our considerations $\beta$ might be negative, such conclusion for elements of the class $M_{0,\beta}$ is not implied directly.
On the other hand, it follows from \cite[Theorem 1]{Fu}  that for $\beta<1$ elements of the class $M_{0,\beta}$ are starlike functions of order $\frac 12.$ For the sake of completeness we  prove here the same conclusion for $\beta \leq 1.$ 

\begin{lemma}\label{lemm-star}
Let $\beta \leq 1$ and let $f\in M_{0,\beta}$, that is,
\begin{equation*}\label{alpha0}
\Re\left\{\beta\frac{z f'(z)}{f(z)}+(1-\beta)\left(1+\frac{zf''(z)}{f'(z)}\right) \right\}>\frac{\beta}{2}\quad (z \in \D).
\end{equation*}
Then $\Re \frac{zf'(z)}{f(z)}>\frac{1}{2}$, $z\in\D$, that is, $f\in S^*\left(\frac12\right)$\,.
\end{lemma}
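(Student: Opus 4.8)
The plan is to reduce the hypothesis to a first-order differential inequality for the logarithmic derivative $p(z):=\frac{zf'(z)}{f(z)}$ and then close the argument with a maximum-principle lemma. First I would record the identity that for $f\in\A$ (so that $p(0)=1$),
\[
1+\frac{zf''(z)}{f'(z)}=p(z)+\frac{zp'(z)}{p(z)},
\]
obtained by differentiating $zf'(z)=p(z)f(z)$ and using $\frac{f(z)}{f'(z)}=\frac{z}{p(z)}$. Substituting into the defining expression of $M_{0,\beta}$, the hypothesis becomes
\[
\Re\left[p(z)+(1-\beta)\frac{zp'(z)}{p(z)}\right]>\frac{\beta}{2},\qquad z\in\D,
\]
while the desired conclusion is simply $\Re p(z)>\frac12$. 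Here $p$ is analytic with $p(0)=1$: for $f\in M_{0,\beta}$ the generating expression $g_{0,\beta}$ is holomorphic with half-plane range, which forces $f$ to be zero-free on $\D\setminus\{0\}$ (a zero of $f$, or of $f'$, would create a pole of $g_{0,\beta}$), so $p$ has only a removable singularity at the origin.

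To finish I would argue by contradiction using Jack's lemma (Lemma~\ref{lem-Jack}). Set $\omega:=1-1/p$, so that $p=\frac{1}{1-\omega}$ and $\omega(0)=0$; by Lemma~\ref{lemm-Psi} with $\gamma=\frac12$ the claim $\Re p>\frac12$ is exactly $\omega\in\Omega$. If $\omega\notin\Omega$, then $|\omega|$ reaches $1$ at some $z_0$ with $\omega(z_0)=e^{i\theta}$ and $z_0\omega'(z_0)=k\,\omega(z_0)$, $k\ge1$. A short computation gives $\frac{z_0p'(z_0)}{p(z_0)}=\frac{k\omega(z_0)}{1-\omega(z_0)}$, whence
\[
p(z_0)+(1-\beta)\frac{z_0p'(z_0)}{p(z_0)}=\frac{1+(1-\beta)k\,e^{i\theta}}{1-e^{i\theta}}.
\]
Using $\Re\frac{1}{1-e^{i\theta}}=\frac12$ and $\Re\frac{e^{i\theta}}{1-e^{i\theta}}=-\frac12$, together with $k\ge1$ and $1-\beta\ge0$ (here the hypothesis $\beta\le1$ enters), the real part of the left-hand side equals $\frac12-\frac{(1-\beta)k}{2}\le\frac{\beta}{2}$, contradicting the displayed hypothesis. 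Hence $\omega\in\Omega$, i.e. $\Re p>\frac12$ and $f\in S^*\!\left(\frac12\right)$.

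The two places needing care are (i) the analyticity of $p$, which I deduce from holomorphy of $g_{0,\beta}$ as above; and (ii) the sign bookkeeping in the final estimate, where it is precisely $1-\beta\ge0$ that lets the factor $(1-\beta)k$ drive the real part down to $\frac{\beta}{2}$. An essentially equivalent route avoids the substitution $\omega=1-1/p$: setting $P:=2p-1$ (so $P(0)=1$ and $\Re p>\frac12\Leftrightarrow\Re P>0$), one verifies the admissibility condition of Lemma~\ref{lem-mm-e} for $\psi(r,s;z)=\frac{r+1}{2}+(1-\beta)\frac{s}{r+1}$ and $\Omega=\{\Re w>\frac{\beta}{2}\}$, the computation $\Re\psi(\rho i,\sigma)=\frac12+(1-\beta)\frac{\sigma}{1+\rho^2}\le\frac{\beta}{2}$ being the same inequality in disguise. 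The main obstacle is really just organizing the reduction identity correctly; once the problem is first-order, the boundary estimate is routine.
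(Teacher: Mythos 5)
Your proof is correct and takes essentially the same route as the paper's: the identical substitution $\omega=1-\frac{f(z)}{zf'(z)}$, the same appeal to Jack's lemma at the boundary point $z_0$, and the same final estimate (your $\frac12-\frac{(1-\beta)k}{2}\le\frac{\beta}{2}$ is the paper's $\frac{(1-\beta)(1-k)}{2}\le 0$ rearranged). Your explicit remark on why $p$ is analytic and your alternative admissibility phrasing via Lemma~\ref{lem-mm-e} are harmless additions that do not change the argument.
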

\begin{proof}
Lemma~\ref{lemm-Psi} implies that $\Re \frac{zf'(z)}{f(z)}>\frac{1}{2}$ if, and only if, the function $\omega$ defined by \eqref{Psi}, namely,
\begin{equation*}\label{like-Psi}
\omega(z)=\frac{\frac{zf'(z)}{f(z)}-1}{\frac{zf'(z)}{f(z)}}=1-\frac{f(z)}{zf'(z)}\,
\end{equation*}
belongs to $\Omega$.
Differentiating the functional equation $\frac{zf'(z)}{f(z)}=\frac{1}{1-\omega(z)}$, one gets
\begin{equation*}\label{log-diff}
\frac{f(z)}{zf'(z)}\frac{(f'(z)+zf''(z))f(z)-z(f'(z))^2}{f^2(z)}=\frac{\omega'(z)}{1-\omega(z)}\,,
\end{equation*}
or
\begin{equation}\label{log-diff1}
\left(1+\frac{zf''(z)}{f'(z)}\right)-\frac{zf'(z)}{f(z)}=\frac{z\omega'(z)}{1-\omega(z)}\,.
\end{equation}
Assume by contradiction that $\omega$ is not a self-mapping of the unit disk. Then  there is a point $z_0 \in \D$ such that $|\omega(z)|<1$ for all $|z|<|z_0|$ and $|\omega(z_0)|=1.$ By Lemma~\ref{lem-Jack} we have 
$\frac{z_0\omega'(z_0)}{\omega (z_0)}=k\ge 1$.
Using the notation $\omega(z_0)=a+ib$ for some $a,b \in R$ such that $a^2+b^2=1$, we have
$$\Re\left\{\frac{1}{1-\omega(z_0)}\right\}=\frac{1-a}{1-2a+(a^2+b^2)}=\frac{1}{2}.$$
Hence \eqref{log-diff1} yields
\begin{eqnarray*}\label{alpha0-1}
&&\Re\left[\left\{\beta\frac{z f'(z)}{f(z)}+(1-\beta)\left(1+\frac{zf''(z)}{f'(z)}\right)-\frac{\beta}{2} \right\}\right]_{z=z_0} \\
&=&\Re\left[(1-\beta)\left(1+\frac{zf''(z)}{f'(z)}-\frac{z f'(z)}{f(z)}\right)+  \frac{z f'(z)}{f(z)}-\frac{\beta}{2}\right]_{z=z_0}\\
&=&\Re\left[(1-\beta)\frac{z_0\omega'(z_0)}{1-\omega(z_0)}+  \frac{1}{1-\omega(z_0)}-\frac{\beta}{2}\right]\\
&=&\Re\left[(1-\beta)k\frac{\omega(z_0)}{1-\omega(z_0)}+  \frac{1}{1-\omega(z_0)}-\frac{\beta}{2}\right]\\
&=&(1-\beta)k\cdot \left(-1+\frac{1}{2}\right)+\frac{1}{2}-\frac{\beta}{2}=\frac{(1-\beta)(1-k)}{2}\leq 0,
\end{eqnarray*}
which contradicts our assumption. The proof is complete.
\end{proof}

We remark that for $0\leq\beta< 2 $  the class $M_{0,\beta}$ consists of starlike, hence univalent, functions by \cite[Corolarry 3]{Fu}.

\begin{theorem}\label{thm-filtr-beta}
Let $\beta <\beta_1 \leq 1$. Then we have $ M_{0, \beta}\subseteq M_{0,\beta_1}$.
\end{theorem}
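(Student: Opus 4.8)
The plan is to realize the operator $g_{0,\beta_1}$ as an affine combination of the logarithmic derivative $\frac{zf'(z)}{f(z)}$ and the operator $g_{0,\beta}$ at the original parameter, and then to read off the bound on its real part directly from the two inequalities already at our disposal. The argument is purely algebraic once the right combination is identified.

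First I would invoke Lemma~\ref{lemm-star}: since $\beta\le1$, every $f\in M_{0,\beta}$ lies in $S^*\!\left(\frac12\right)$, hence $\Re\frac{zf'(z)}{f(z)}>\frac12$ on $\D$. In particular such an $f$ is univalent, so $f(z)\neq0$ and $f'(z)\neq0$ for $z\in\D\setminus\{0\}$, and all the quotients entering $g_{0,\beta_1}$ are holomorphic on $\D$. This is precisely what makes the statement $f\in M_{0,\beta_1}$ meaningful, and it will be the only place where the hypothesis $\beta\le 1$ is used.

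The key step is to eliminate the convexity term $1+\frac{zf''(z)}{f'(z)}$. Solving the identity
\[
g_{0,\beta}(z)=\beta\frac{zf'(z)}{f(z)}+(1-\beta)\left(1+\frac{zf''(z)}{f'(z)}\right)
\]
for this term (which is legitimate because $\beta<1$) and substituting into the definition of $g_{0,\beta_1}$ gives, after collecting terms,
\[
g_{0,\beta_1}(z)=t\,\frac{zf'(z)}{f(z)}+(1-t)\,g_{0,\beta}(z),\qquad t:=\frac{\beta_1-\beta}{1-\beta}.
\]
Because $\beta<\beta_1\le1$ and $1-\beta>0$, one checks at once that $0<t\le1$, so this is a genuine convex combination.

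Taking real parts and inserting the bounds $\Re\frac{zf'(z)}{f(z)}>\frac12$ (from Lemma~\ref{lemm-star}) and $\Re g_{0,\beta}(z)>\frac{\beta}{2}$ (from $f\in M_{0,\beta}$), and noting that the first weight $t$ is strictly positive, I obtain
\[
\Re g_{0,\beta_1}(z)>\frac{t}{2}+(1-t)\frac{\beta}{2}=\frac{t(1-\beta)+\beta}{2}=\frac{\beta_1}{2},
\]
the last equality following from $t(1-\beta)=\beta_1-\beta$. Hence $f\in M_{0,\beta_1}$, which is the asserted inclusion. I do not anticipate any serious obstacle: the whole proof rests on the affine-combination identity together with the elementary verification that $0<t\le1$, and the only point requiring care is the well-definedness of the operators on $\D$, which is supplied exactly by the starlikeness guaranteed by Lemma~\ref{lemm-star}.
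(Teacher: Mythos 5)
Your proof is correct, and it is genuinely different from (and shorter than) the one in the paper. The paper also begins from Lemma~\ref{lemm-star}, but then runs a contradiction argument: it introduces the associated self-maps $\omega_\beta$ and $\omega_{\beta_1}$ via \eqref{Psi}, eliminates the common quantity $1+\tfrac{zf''}{f'}-\tfrac{zf'}{f}$ to relate them through identity \eqref{heqh1}, and invokes the Clunie--Jack lemma at a hypothetical boundary point of $|\omega_{\beta_1}|$ to reach a contradiction. You instead observe that, writing $S=\tfrac{zf'}{f}$ and $C=1+\tfrac{zf''}{f'}$, the map $\beta\mapsto g_{0,\beta}=\beta S+(1-\beta)C$ is affine, so
\[
g_{0,\beta_1}=t\,S+(1-t)\,g_{0,\beta},\qquad t=\frac{\beta_1-\beta}{1-\beta}\in(0,1],
\]
and the conclusion $\Re g_{0,\beta_1}>\tfrac{\beta_1}{2}$ drops out by taking real parts, using $\Re S>\tfrac12$ from Lemma~\ref{lemm-star} and $\Re g_{0,\beta}>\tfrac{\beta}{2}$ with nonnegative weights (I checked the weight and threshold computations; they are right, including the degenerate case $t=1$, i.e.\ $\beta_1=1$). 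Both arguments ultimately rest on the same external input, namely the Marx--Strohh\"acker-type Lemma~\ref{lemm-star}, so neither is more general in substance; what your route buys is the elimination of the second application of Jack's lemma and of the auxiliary functions $\omega_\beta,\omega_{\beta_1}$ entirely. One small caveat on your prose: the hypothesis $\beta_1\le 1$ is used not only via Lemma~\ref{lemm-star} but also to guarantee $t\le1$, i.e.\ that the weight $1-t$ attached to $\Re g_{0,\beta}$ is nonnegative; without it the combination would no longer be convex and the inequality could not be propagated. Note also that your trick does not transfer to the filtration $\{M_{\alpha,1-\alpha}\}$ of Section~4, where the analogous combination fails to be convex, which explains why the paper uses the boundary-point argument in both places.
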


\begin{proof}
Let $f\in M_{0, \beta}$. Then $\Re g_{0,\beta}(z)>\beta/2$, $z\in\D$, that is, there exists a function $\omega_{\beta} \in \Omega$ such that
\[ g_{0,\beta}(z) = \left(1-\frac{\beta}{2}\right)\frac{1+\omega_{\beta}(z)}{1-\omega_{\beta}(z)}+\frac{\beta}{2}, \]
and hence
\begin{equation}\label{w1}
\omega_\beta(z)=\frac{g_{0,\beta}(z)-\frac{\beta}{2}-1+\frac{\beta}{2}}{g_{0,\beta}(z)-\frac{\beta}{2}+1-\frac{\beta}{2}} \ .
\end{equation}
Now, let consider function $\omega_{\beta_1}$ defined in a similar way by
\begin{equation*}\label{w}
 \omega_{\beta_1}(z)=\frac{g_{0,{\beta_1}}(z)-\frac{{\beta_1}}{2}-1+\frac{{\beta_1}}{2}}{g_{0,{\beta_1}}(z)-\frac{{\beta_1}}{2}+1-\frac{{\beta_1}}{2}} \ .
 \end{equation*}
It is analytic on $\D$ and vanishes at the origin. So, in order to place $f$ in $M_{0, \beta_1}$, it is enough to show that $|\omega_{\beta_1}(z)|$ for all $z\in\D$.

In that direction, let rewrite \eqref{w1} as
\begin{equation*}\label{w-simply}
\omega_{{\beta}}(z)=\frac{g_{0,{{\beta}}}(z)-1}{g_{0,{{\beta}}}(z)+1-{{\beta}}}=\frac{{{\beta}}\frac{z f'(z)}{f(z)}+(1-{{\beta}})\left(1+\frac{zf''(z)}{f'(z)}\right)-1}{{{\beta}}\frac{z f'(z)}{f(z)}+(1-{{\beta}})\left(1+\frac{zf''(z)}{f'(z)}\right)+1-{{\beta}}},
\end{equation*}
and then
\begin{eqnarray}\label{w-simply3}
\nonumber \omega_{{\beta}}(z)\left((1-{{\beta}})\left(1+\frac{zf''(z)}{f'(z)}-\frac{z f'(z)}{f(z)}\right)+  \frac{z f'(z)}{f(z)}+1-{{\beta}}\right)\\
= (1-{{\beta}})\left(1+\frac{zf''(z)}{f'(z)}-\frac{z f'(z)}{f(z)}\right)+  \frac{z f'(z)}{f(z)}-1.
\end{eqnarray}
Similarly, we have
\begin{eqnarray}\label{w-simply4}
\nonumber \omega_{\beta_1}(z)\left((1-\beta_1)\left(1+\frac{zf''(z)}{f'(z)}-\frac{z f'(z)}{f(z)}\right)+  \frac{z f'(z)}{f(z)}+1-\beta_1\right)\\
= (1-\beta_1)\left(1+\frac{zf''(z)}{f'(z)}-\frac{z f'(z)}{f(z)}\right)+  \frac{z f'(z)}{f(z)}-1 \ .
\end{eqnarray}
Further, let define function $h(z)=2g_{0,1/2}(z)=1+\frac{zf''(z)}{f'(z)}-\frac{z f'(z)}{f(z)}$. Equation~\eqref{w-simply3} implies
\begin{eqnarray*}\label{h(z)}
h(z)&=&\frac{(1-\beta)\omega_{\beta}(z)+(\omega_{\beta}(z)-1)\frac{zf'(z)}{f(z)}+1}{(1-\beta)(1-\omega_{\beta}(z))}\\
&=&\frac{(1-\beta)(\omega_{\beta}(z)-1)+(\omega_{\beta}(z)-1)\frac{zf'(z)}{f(z)}+2-\beta}{(1-\beta)(1-\omega_{\beta}(z))}\\
&=&-1-\frac{1}{(1-\beta)}\frac{zf'(z)}{f(z)}+\frac{2-\beta}{(1-\beta)(1-\omega_{\beta}(z))}\,,
\end{eqnarray*}
while \eqref{w-simply4} leads  to
\[  h(z)=-1-\frac{1}{(1-{\beta_1})}\frac{zf'(z)}{f(z)}+\frac{2-{\beta_1}}{(1-{\beta_1})(1-\omega_{{\beta_1}}(z))}\,.\]
Therefore, %from equations \eqref{w-simply3} and \eqref{w-simply4} we have
\begin{equation*}\label{heqh}
\begin{split}
&\quad \!-\frac{1}{1-\beta}\cdot\frac{zf'(z)}{f(z)}+\frac{2-\beta}{(1-\beta)(1-\omega_{\beta}(z))}\\
&=-\frac{1}{1-\beta_1}\cdot\frac{zf'(z)}{f(z)}+\frac{2-\beta_1}{(1-\beta_1)(1-\omega_{\beta_1}(z))},
\end{split}
\end{equation*}
or, equivalently,
\begin{equation}\label{heqh1}
\left(\frac{1}{1-\beta_1}-\frac{1}{1-\beta}\right) \frac{zf'(z)}{f(z)}+\frac{2-\beta}{1-\beta}\cdot\frac{1}{1-\omega_{\beta}(z)}=\frac{2-\beta_1}{1-\beta_1}\cdot\frac{1}{1-\omega_{\beta_1(z)}} \,.
\end{equation}

Now, assume by contradiction that $\omega_{\beta_1}$ is not a self-mapping of the unit disk.  Then there exists a point $z_0 \in \D$ such that $|\omega_{\beta_1}(z)|<1$ for all $|z|<|z_0|$ and $|\omega_{\beta_1}(z_0)|=1.$
Let substitute $z=z_0$ in the right-hand side of \eqref{heqh1}, then we get
\[
\begin{split}
&\quad \left(\frac{1}{1-\beta_1}-\frac{1}{1-\beta}\right)\Re\frac{z_0f'(z_0)}{f(z_0)}+\frac{2-\beta}{1-\beta}\Re\frac{1}{(1-\omega_{\beta}(z_0))}\\
&=\Re \frac{2-\beta_1}{(1-\beta_1)}\frac{1}{1-\omega_{\beta_1}(z_0)}=\frac{1}{2}\cdot\frac{2-\beta_1}{1-\beta_1}.
\end{split}
\]
At the same time, by Lemma~\ref{lemm-star} the expression in the left-hand side of \eqref{heqh1} is
\begin{eqnarray*}\label{heqh3}
&&\left(\frac{1}{1-\beta_1}-\frac{1}{1-\beta}\right)\Re\frac{z_0f'(z_0)}{f(z_0)}+\frac{2-\beta}{1-\beta}\Re\frac{1}{(1-\omega_{\beta}(z_0))}>\\
&&\frac{1}{2}\left(\frac{1}{1-\beta_1}-\frac{1}{1-\beta}+\frac{2-\beta}{1-\beta}\right)=\frac{1}{2}\cdot \frac{2-\beta_1}{1-\beta_1},
\end{eqnarray*}
which contradicts our assumption.
\end{proof}

 In fact, the last theorem states that the family $\left\{M_{0,\beta}\right\}_{\beta\in(-\infty,1]}$ is a filtration. We now present the main result of this section that includes an interpolation result for estimate \eqref{K-M}.

 \begin{theorem}\label{th-1-sharp}
   The  family $\left\{M_{0,\beta}\right\}_{\beta\in[0,1]}$ is a filtration of $\G$ such that $M_{0,1}=S^*\left(\frac12\right)\subsetneq\G$ and
   \[
   \sup_{f\in M_{0,\beta}}|\Phi(f, \lambda)| = \max \left(\frac{2-\beta}{6-4\beta},|1-\lambda|\right),\quad (\lambda \in \C).
   \]
 \end{theorem}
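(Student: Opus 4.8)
The plan is to assemble the theorem from the three facts already established in this section, so that each assertion reduces to a prior statement with $\alpha=0$. First I would handle the filtration claim. Theorem~\ref{thm-filtr-beta} shows $M_{0,\beta}\subseteq M_{0,\beta_1}$ whenever $\beta<\beta_1\le 1$, so the family $\left\{M_{0,\beta}\right\}_{\beta\in[0,1]}$ is automatically nested and therefore a filtration in the sense of Definition~\ref{def-filt}. To see that it is a filtration \emph{of} $\G$ rather than of a larger class, I would invoke the remark following Lemma~\ref{lemm-star}: for $0\le\beta<2$ every $f\in M_{0,\beta}$ is starlike, and by Lemma~\ref{lemm-star} in fact $f\in S^*\!\left(\tfrac12\right)\subsetneq\G$. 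Thus $M_{0,\beta}\subseteq\G$ for all $\beta\in[0,1]$, which is exactly what the definition of a filtration of $\G$ requires. The endpoint identity $M_{0,1}=S^*\!\left(\tfrac12\right)$ is recorded directly after Definition~\ref{M_mu}, and the strict inclusion $S^*\!\left(\tfrac12\right)\subsetneq\G$ follows from the Marx--Strohh\"acker chain quoted in Section~\ref{sect-intro}.

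Next I would derive the sharp bound. This is the content of Theorem~\ref{th-FS-estim} specialized to $\alpha=0$: there the hypotheses $\alpha+\beta<2$ and $5\alpha+4\beta<6$ become $\beta<2$ and $\beta<\tfrac32$, both satisfied on $[0,1]$, and the constant
\[
\mu=\frac{2-\alpha-\beta}{6-5\alpha-4\beta}
\]
collapses to $\mu=\frac{2-\beta}{6-4\beta}$. Hence Theorem~\ref{th-FS-estim} already gives the upper estimate
\[
|\Phi(f,\lambda)|\le \max\!\left(\frac{2-\beta}{6-4\beta},\,|1-\lambda|\right),\qquad f\in M_{0,\beta},\ \lambda\in\C.
\]
So the inequality ``$\le$'' is free.

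The main obstacle, and the only part demanding genuine new work, is \emph{sharpness}: I must exhibit, for each $\beta\in[0,1]$ and each $\lambda$, a function $f\in M_{0,\beta}$ attaining the bound, so that the supremum equals the right-hand side. The natural route is to trace back through the equality case of Lemma~\ref{lem-FS-Psi}, which is sharp precisely when $\omega$ is (a rotation of) a suitable extremal self-map. In the proof of Theorem~\ref{th-FS-estim} the relevant Schur coefficients are $b_1=a_2$ and $b_2=\mu(a_3-a_2^2)$, and equality in $|b_2-sb_1^2|\le\max(1,|s|)$ holds for extremal $\omega\in\Omega$. I would therefore take the extremal $\omega$ for Lemma~\ref{lem-FS-Psi} (namely $\omega(z)=z^2$ when $|s|\le 1$, i.e.\ $|1-\lambda|\le\mu$, and $\omega(z)=z$ when $|s|\ge1$), substitute it into \eqref{Psi} with $\gamma=\tfrac{\beta}{2}$ and $g=g_{0,\beta}$ to obtain the prescribed value of $g_{0,\beta}$, and then recover $f$ by integrating the resulting Briot--Bouquet relation. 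Lemma~\ref{lem-mm-2} guarantees that the recovered $f$ is a bona fide analytic (indeed univalent) element of $M_{0,\beta}$, since $\Re g_{0,\beta}>\tfrac{\beta}{2}$ is built into the construction. One then reads off the Taylor coefficients $a_2,a_3$ of this extremal $f$ and checks that $|a_3-\lambda a_2^2|$ equals $\mu$ in the regime $|1-\lambda|\le\mu$ and equals $|1-\lambda|$ in the regime $|1-\lambda|\ge\mu$, thereby matching $\max\!\left(\mu,|1-\lambda|\right)$. Verifying that the extremizer indeed lies in $M_{0,\beta}$ (a boundary-value/closure argument may be needed, as the extremal $\omega$ maps onto $\partial\D$) and computing its coefficients explicitly is where the real effort will be; everything else is bookkeeping over the three cited results.
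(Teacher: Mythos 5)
Your proposal is correct and follows essentially the same route as the paper: the filtration property from Theorem~\ref{thm-filtr-beta}, the upper bound from Theorem~\ref{th-FS-estim} with $\alpha=0$, and sharpness by prescribing $g_{0,\beta}$ via the extremal Schwarz functions $\omega(z)=z$ and $\omega(z)=z^2$ (which yields exactly the paper's $g_{0,\beta}^{(1)},g_{0,\beta}^{(2)}$) and recovering $f$ through the Briot--Bouquet equation via Lemma~\ref{lem-mm-2}. The only superfluous worry is your ``boundary-value/closure argument'': the extremal $\omega$ are genuine self-maps of $\D$, so $\Re g_{0,\beta}>\tfrac\beta2$ holds strictly and no such argument is needed.
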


 \begin{proof}
   We have already mentioned that by Theorem~\ref{thm-filtr-beta}  the family \linebreak $\left\{M_{0,\beta}\right\}_{\beta\in[0,1]}$ is a filtration.

Note that Theorem \ref{th-FS-estim} applied to the case $\alpha=0$  gives
  \[
   \sup_{f\in M_{0,\beta}}|\Phi(f, \lambda)| \le \max \left(\frac{2-\beta}{6-4\beta},|1-\lambda|\right)\quad (\lambda \in \C).
   \]
According to the result of Keogh and Merkes \eqref{K-M}, this supremum is attained at the right-hand side whenever $\beta=0$ and $\beta=1$.  Thus we have to prove that the estimate is sharp whenever $\beta\in(0,1)$.

   For this purpose, we show that there are two functions $f^{(1)},f^{(2)}\in M_{0,\beta}$  such that the functions $g_{0,\beta}^{(1)},g_{0,\beta}^{(2)}$ constructed for them by \eqref{g} are
  \begin{eqnarray}\label{g-1}
  % \nonumber % Remove numbering (before each equation)
    g_{0,\beta}^{(1)}(z) &=& \frac{\beta}{2} +\left( 1- \frac{\beta}{2} \right) \frac{1+z}{1-z} \,,\\
    g_{0,\beta}^{(2)}(z) &=& \frac{\beta}{2} +\left( 1- \frac{\beta}{2} \right) \frac{1+z^2}{1-z^2} \,,\nonumber
  \end{eqnarray}
  respectively. Denote $q(z):=\frac{z {f^{(1)}}'(z)}{f^{(1)}(z)}$. Then equality \eqref{g-1} coincides with
  \[
  q(z)+(1-\beta)\frac{zq'(z)}{q(z)} = \frac{\beta}{2} +\left( 1- \frac{\beta}{2} \right) \frac{1+z}{1-z} \,.
  \]
  It follows from Lemma~\ref{lem-mm-2} that the solution $q$ of this (Briot--Bouquet) differential equation is analytic in the unit disk $\D$. Using this solution we conclude that $f^{(1)}$ is analytic in $\D$ too. By construction, it belongs to the class $M_{0,\beta}$. Similarly, one considers the case of $f^{(2)}$.

  Further, equality \eqref{g-1} enables to calculate early Taylor coefficients of $f^{(1)}$, in particular, we can see that $a_2=a_3=1$. Hence, $\Phi(f^{(1)}, \lambda) = 1-\lambda$. Repeating such calculation for $f^{(2)}$, we get $a_2=0, \ a_3=\frac{2-\beta}{6-4\beta}.$ Thus $\Phi(f^{(2)}, \lambda) = \frac{2-\beta}{6-4\beta}.$

  This completes the proof.
   \end{proof}

\medskip

\section{Filtration classes $M_{\alpha,1-\alpha}$}\label{sect-our_class}
\setcounter{equation}{0}
%$\mu\in[\frac12,1]$
In this section we study the case $\beta=1-\alpha$ when $\alpha<2$. In this direction, for a given $f \in \A$ we define
\begin{equation}\label{g-alpha}
g_{\alpha,1-\alpha}(z)=\alpha\frac{f(z)}{z}+(1-\alpha)\frac{zf'(z)}{f(z)}
\end{equation}
and\begin{equation*}\label{class }
M_{\alpha,1-\alpha}=\left\{f \in \A: \ \Re\left\{ g_{\alpha,1-\alpha}(z) \right\}> \frac{1}{2}, \, z\in\D \right\}.
\end{equation*}

Note that due to the classical result of Marx--Strohh\"{a}cker,
\[ 
\Re \frac{zf'(z)}{f(z)}>\frac12 \qquad \forall z\in\D  \quad \Longrightarrow \qquad \Re\frac{f(z)}{z}>\frac12 \qquad \forall z\in\D,
\]
we get $S^*(\frac 12)\subset M_{\alpha,1-\alpha}$ for any $\alpha<2$.

\begin{theorem}\label{filtration-alpha}
Let $\alpha <\alpha_1<2$. Then we have $ M_{\alpha, 1-\alpha}\subseteq M_{\alpha_1,1-\alpha_1}$.
\end{theorem}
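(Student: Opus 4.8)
The plan is to imitate the proof of Theorem~\ref{thm-filtr-beta}: translate membership in each class into a Schwarz-function condition via Lemma~\ref{lemm-Psi}, derive an algebraic identity linking the two relevant Schwarz functions, and run a boundary-point argument. Since $g_{\alpha,1-\alpha}(0)=\alpha+(1-\alpha)=1$, Lemma~\ref{lemm-Psi} with $\gamma=\frac12$ shows that $f\in M_{\alpha,1-\alpha}$ if and only if
\[
\omega_\alpha:=1-\frac{1}{g_{\alpha,1-\alpha}}\in\Omega,
\]
and similarly for $\omega_{\alpha_1}$ with $\alpha_1$ in place of $\alpha$. So it suffices to prove that $\omega_\alpha\in\Omega$ forces $\omega_{\alpha_1}\in\Omega$. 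The key algebraic step is to eliminate $\frac{zf'}{f}$ between the definition \eqref{g-alpha} written for $\alpha$ and for $\alpha_1$ (valid for $\alpha\neq1$), which yields the identity
\[
g_{\alpha_1,1-\alpha_1}(z)=\frac{\alpha_1-\alpha}{1-\alpha}\,\frac{f(z)}{z}+\frac{1-\alpha_1}{1-\alpha}\,g_{\alpha,1-\alpha}(z),
\]
whose two coefficients sum to $1$; thus $g_{\alpha_1,1-\alpha_1}$ is an affine combination of $f/z$ and $g_{\alpha,1-\alpha}$.

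Next I would establish the analogue of Lemma~\ref{lemm-star} for this family, namely that $f\in M_{\alpha,1-\alpha}$ implies $\Re\frac{f(z)}{z}>\frac12$, i.e.\ $M_{\alpha,1-\alpha}\subseteq\A_{\frac12}$. Setting $p:=2\frac{f}{z}-1$ (so $p(0)=1$, $\frac{f}{z}=\frac{p+1}{2}$ and $\frac{zf'}{f}=1+\frac{zp'}{p+1}$), the class condition reads $\psi(p,zp',z^2p'';z)\in\{\Re>\frac12\}$ with $\psi(r,s,t;z)=\frac{\alpha(r+1)}{2}+(1-\alpha)\bigl(1+\frac{s}{r+1}\bigr)$, independent of $t$. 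A direct computation gives $\Re\psi(\rho i,\sigma,\mu+\nu i;z)=1-\frac\alpha2+(1-\alpha)\frac{\sigma}{1+\rho^2}$, which for $\sigma\le-\frac{1+\rho^2}{2}$ is $\le\frac12$ precisely when $1-\alpha\ge0$. Hence for $\alpha\le1$ the admissibility hypothesis of Lemma~\ref{lem-mm-e} holds and we conclude $\Re p>0$, that is $\Re\frac{f(z)}{z}>\frac12$.

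With these two ingredients the case $\alpha<\alpha_1\le1$ is immediate: both coefficients $\frac{\alpha_1-\alpha}{1-\alpha}$ and $\frac{1-\alpha_1}{1-\alpha}$ are then nonnegative, so $g_{\alpha_1,1-\alpha_1}$ is a genuine convex combination of $f/z$ and $g_{\alpha,1-\alpha}$, each of which has real part exceeding $\frac12$; since $\{\Re>\frac12\}$ is a half-plane, $\Re g_{\alpha_1,1-\alpha_1}>\frac12$, i.e.\ $f\in M_{\alpha_1,1-\alpha_1}$. Equivalently, one may run the boundary-point argument of Theorem~\ref{thm-filtr-beta} verbatim, assuming $\omega_{\alpha_1}\notin\Omega$, choosing $z_0$ with $|\omega_{\alpha_1}(z_0)|=1$ (so $\Re g_{\alpha_1,1-\alpha_1}(z_0)=\frac12$), and reaching a contradiction from the identity together with the two half-plane bounds.

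The main obstacle is the regime $\alpha_1>1$, where the coefficient $\frac{1-\alpha_1}{1-\alpha}$ turns negative and, moreover, the inclusion $M_{\alpha,1-\alpha}\subseteq\A_{\frac12}$ is no longer available (the admissibility computation above fails for $\alpha>1$). Here the convexity shortcut collapses: evaluating the identity at a boundary point $z_0$ only forces $\Re\frac{f(z_0)}{z_0}>\frac12$ as a consequence, not a contradiction. I expect this case to require the full Jack relation $z_0\omega_{\alpha_1}'(z_0)=k\,\omega_{\alpha_1}(z_0)$, $k\ge1$, from Lemma~\ref{lem-Jack}, which upon differentiating the identity introduces the second-order term $\frac{z^2(f/z)''}{f/z}$; controlling this term is the delicate point. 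An alternative worth trying is a Briot--Bouquet subordination argument in the spirit of Lemma~\ref{lem-mm-2}, rewriting the defining relation as a first-order differential subordination for $f/z$ and invoking a Miller--Mocanu dominance theorem; this would not rely on the auxiliary half-plane inclusion and should cover $1\le\alpha<\alpha_1<2$ uniformly.
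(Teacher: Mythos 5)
Your argument for the range $\alpha<\alpha_1\le 1$ is correct and in fact more transparent than the paper's: the identity
$g_{\alpha_1,1-\alpha_1}=\frac{\alpha_1-\alpha}{1-\alpha}\cdot\frac{f}{z}+\frac{1-\alpha_1}{1-\alpha}\,g_{\alpha,1-\alpha}$
with coefficients summing to $1$, together with your admissibility computation showing $M_{\alpha,1-\alpha}\subseteq\A_{\frac12}$ for $\alpha\le1$, places $g_{\alpha_1,1-\alpha_1}(\D)$ in the half-plane $\{\Re w>\frac12\}$ by convexity. But the theorem claims the inclusion for all $\alpha<\alpha_1<2$, and you explicitly leave $\alpha_1\in(1,2)$ open; that is a genuine gap, not a loose end --- for instance $M_{1,0}=\A_{\frac12}\subseteq M_{\alpha_1,1-\alpha_1}$ for $1<\alpha_1<2$ is part of the assertion, and in that regime both your convex-combination step and your auxiliary inclusion fail, as you note.

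The paper closes exactly this case by eliminating the \emph{other} variable: solving \eqref{g-alpha} for $\frac{f(z)}{z}$ instead of $\frac{zf'(z)}{f(z)}$ gives (for $\alpha\neq0$)
\[
\frac{1}{\alpha_1}\,g_{\alpha_1,1-\alpha_1}(z)=\frac{1}{\alpha}\,g_{\alpha,1-\alpha}(z)-\left(\frac{1}{\alpha}-\frac{1}{\alpha_1}\right)\frac{zf'(z)}{f(z)},
\]
again an affine combination with coefficients summing to $1$, but now of $g_{\alpha,1-\alpha}$ and $\frac{zf'}{f}$. At a first contact point $z_0$ with $|\omega_{\alpha_1}(z_0)|=1$ one has $\Re g_{\alpha_1,1-\alpha_1}(z_0)=\frac12$, and since $\Re g_{\alpha,1-\alpha}(z_0)>\frac12$ the identity forces $\Re\frac{z_0f'(z_0)}{f(z_0)}>\frac12$; the paper then invokes $S^*\left(\frac12\right)\subset M_{\alpha_1,1-\alpha_1}$ (Marx--Strohh\"acker), valid for \emph{every} $\alpha_1<2$, to reach the contradiction. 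This is why the difficulties you anticipated --- the Jack derivative relation and a second-order term --- never arise: the auxiliary quantity in the paper's decomposition is $\frac{zf'}{f}$, whose half-plane bound is precisely starlikeness of order $\frac12$ and hence already implies membership in every target class. (Be aware that even the paper's route is terse at the end --- a strict inequality at the single point $z_0$ is what one actually obtains, and the cases $\alpha=0$ and $\alpha<0<\alpha_1$, where $\frac1\alpha-\frac1{\alpha_1}$ changes sign, need separate attention --- but the decomposition through $\frac{zf'}{f}$ is the idea your proposal is missing.)
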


\begin{proof}
Let $f\in M_{\alpha,1-\alpha}$ and $g_{\alpha,1-\alpha}$ be defined by \eqref{g-alpha}. Define the function $\omega_\alpha\in\Omega$ by
\begin{equation*}\label{w-al}
\omega_\alpha(z)=\frac{g_{\alpha,1-\alpha}(z)-1}{g_{\alpha,1-\alpha}(z)} \, .
\end{equation*}
Then
\begin{equation}\label{w-al-simply3}
 \omega_\alpha(z)\left({\alpha \frac{f(z)}{z}+(1-\alpha)\frac{z f'(z)}{f(z)}}\right)= {\alpha \frac{f(z)}{z}+(1-\alpha)\frac{z f'(z)}{f(z)}}-1 \, .
\end{equation}
In addition, define $\omega_{\alpha_1} (z)$,  replacing $\alpha$ by $\alpha_1$ in \eqref{w-al-simply3}. We have to show that $\omega_{\alpha_1}\in\Omega$ as well.

To this end denote $h(z)=\frac{f(z)}{z}-\frac{z f'(z)}{f(z)}$. Equality~\eqref{w-al-simply3} implies
\begin{equation*}\label{h(z)-al}
-h(z)=\frac{-1+(1-\omega_{\alpha}(z))\frac{zf'(z)}{f(z)}}{\alpha(1-\omega_{\alpha}(z))}=-\frac{1}{\alpha(1-\omega_{\alpha}(z))}+\frac{1}{\alpha}\frac{zf'(z)}{f(z)}\,.
\end{equation*}
 By definition of function $h$, we have
\begin{equation*}\label{heqh-al}
-\frac{1}{\alpha(1-\omega_{\alpha}(z))}+\frac{1}{\alpha}\cdot\frac{zf'(z)}{f(z)}=-\frac{1}{\alpha_1(1-\omega_{\alpha_1}(z))}+\frac{1}{\alpha_1}\cdot\frac{zf'(z)}{f(z)}\,,
\end{equation*}
or, equivalently,
\begin{equation}\label{heqh1-al}
-\left(\frac{1}{\alpha}-\frac{1}{\alpha_1}\right)\cdot \frac{zf'(z)}{f(z)}+\frac{1}{\alpha}\cdot\frac{1}{1-\omega_\alpha(z)}=\frac{1}{\alpha_1}\cdot\frac{1}{1-\omega_{\alpha_1}(z)}. \end{equation}

Obviously $\omega_{\alpha_1}(0)=0$. Assume by contradiction that $\omega_{\alpha_1}$ is not a self-mapping of the unit disk. Then there exists a point $z_0 \in \D$ such that $|\omega_{\alpha_1}(z)|<1$ for all $|z|<|z_0|$ while $|\omega_{\alpha_1}(z_0)|=1.$
Substitute $z=z_0$ in the right-hand side of \eqref{heqh1-al} and get
$$\frac{1}{\alpha_1}\cdot\Re \frac{1}{1-\omega_{\alpha_1}(z_0)}=\frac{1}{2\alpha_1}\,,$$
as we already saw in the proof of Lemma~\ref{lemm-star}. At the same time, by Lemma~\ref{lemm-star}, the left-hand side of \eqref{heqh1-al} is
\begin{eqnarray*}\label{heqh3-al}
&&\!-\left(\frac{1}{\alpha}-\frac{1}{\alpha_1}\right)\cdot \Re \frac{zf'(z)}{f(z)}+\frac{1}{\alpha}\cdot\Re\frac{1}{1-\omega_\alpha(z)}>\\
&&\!-\left(\frac{1}{\alpha}-\frac{1}{\alpha_1}\right)\cdot \Re \frac{zf'(z)}{f(z)}+\frac{1}{2\alpha}, \quad z\in\D.
\end{eqnarray*}
Thus \eqref{heqh1-al} implies that
\begin{equation*}\label{compar}
\frac{1}{2\alpha_1}>-\left(\frac{1}{\alpha}-\frac{1}{\alpha_1}\right)\cdot \Re \frac{zf'(z)}{f(z)}+\frac{1}{2\alpha}, \qquad  z\in\D.
\end{equation*}
In its turn, this means
\begin{equation*}\label{compar}
 \Re \frac{zf'(z)}{f(z)}>\frac{1}{2}, \qquad  z\in\D,
\end{equation*}
that is, $f\in S^*(\frac{1}{2})\subset M_{\alpha_1,1-\alpha_1}$. This contradiction completes the proof.
\end{proof}

It turns out that the classes $M_{\alpha,1-\alpha}$ with $\alpha\in\left[\frac12,2\right)$ have additional important properties. Namely, they form a filtration of generators with sharp estimates  on the Fekete--Szeg\"o functional.

\begin{theorem}\label{M-alp-1-alp}
Let $\alpha\in\left[\frac12,2\right).$  Then $M_{\alpha,1-\alpha}\subset \G$. Moreover, if $\frac 12\le \alpha \le1$, then
\begin{itemize}
  \item [(i)]  each $f\in M_{\alpha,1-\alpha}$ generates a semigroup $\{F_t\}_{t\geq 0} \subset \Hol(\D)$ that satisfies
$$|F_t(z)|\leq  e^{ \frac{1-2\alpha}{2\alpha}t}|z|, \qquad  z\in\D, t>0.$$

  \item [(ii)] the  family $\left\{M_{\alpha,1-\alpha}\right\}_{\alpha\in[\frac12,1]}$ is a filtration of $\G$ that satisfies
 $M_{1,0}=\A_\frac12\subsetneq\G$ and
   \[
   \sup_{f\in M_{\alpha,1-\alpha}}|\Phi(f, \lambda)| = \max \left(\frac{1}{2-\alpha},|1-\lambda|\right)\quad (\lambda \in \C).
   \]
\end{itemize}
\end{theorem}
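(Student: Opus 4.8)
The plan is to reduce every assertion to a statement about $p(z):=f(z)/z$, which is analytic with $p(0)=1$ and satisfies $g_{\alpha,1-\alpha}(z)=\alpha p(z)+(1-\alpha)\bigl(1+\frac{zp'(z)}{p(z)}\bigr)$ (note $p\ne0$ since membership in $M_{\alpha,1-\alpha}$ forces $g_{\alpha,1-\alpha}$ to be analytic). Both the inclusion $M_{\alpha,1-\alpha}\subset\G$ and the refined growth estimate needed for (i) are lower bounds for $\Re p$, which I would extract from the admissibility test of Lemma~\ref{lem-mm-e} (equivalently, a direct application of Jack's Lemma~\ref{lem-Jack}).

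For $M_{\alpha,1-\alpha}\subset\G$ I take $\Omega=\{\Re w>\tfrac12\}$ and $\psi(r,s;z)=\alpha r+(1-\alpha)(1+s/r)$, so that $\psi(p,zp';z)=g_{\alpha,1-\alpha}\in\Omega$ by hypothesis. On the admissible set $r=\rho i$, $s=\sigma$ with $\sigma\le-\frac{1+\rho^2}{2}$, the summand $\alpha\rho i$ is purely imaginary and $s/r=\sigma/(\rho i)$ is purely imaginary, so $\Re\psi(\rho i,\sigma;z)=1-\alpha$. Since $\alpha\ge\tfrac12$ this is $\le\tfrac12$, i.e. $\psi(\rho i,\sigma;z)\notin\Omega$, and Lemma~\ref{lem-mm-e} forces $\Re p>0$, that is $f\in\G$, for every $\alpha\in[\tfrac12,2)$. (This is precisely the Jack identity $\Re g_{\alpha,1-\alpha}(z_0)=1-\alpha$ at a boundary contact point.)

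For part (i) I sharpen this on $[\tfrac12,1]$ to $\Re p(z)>\delta$ with $\delta:=1-\frac1{2\alpha}=\frac{2\alpha-1}{2\alpha}\ge0$. Applying Lemma~\ref{lem-mm-e} to $P:=\frac{p-\delta}{1-\delta}$ (so $P(0)=1$ and $\Re P>0\Leftrightarrow\Re p>\delta$), the value on $P=\rho i$, $zP'=\sigma\le-\frac{1+\rho^2}{2}$, is $\Re g=\alpha\delta+(1-\alpha)+\frac{(1-\alpha)(1-\delta)\delta\,\sigma}{\delta^2+(1-\delta)^2\rho^2}$; the last term is $\le0$ because $\alpha\le1$, $\delta\ge0$ and $\sigma<0$, while $\alpha\delta+(1-\alpha)=\tfrac12$ for exactly this $\delta$. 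Hence $\Re g\le\tfrac12$ on the admissible set and $\Re p>\delta$ throughout $\D$. The growth bound then follows by integrating the generation equation~\eqref{nS1}: for $z\ne0$ one has $\frac{d}{dt}\log|F_t(z)|=\Re\frac{\partial_t F_t(z)}{F_t(z)}=-\Re\frac{f(F_t(z))}{F_t(z)}\le-\delta=\frac{1-2\alpha}{2\alpha}$, whence $|F_t(z)|\le e^{\frac{1-2\alpha}{2\alpha}t}|z|$.

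For part (ii) the nesting $M_{\alpha,1-\alpha}\subseteq M_{\alpha_1,1-\alpha_1}$ is Theorem~\ref{filtration-alpha}, and the previous steps give $M_{\alpha,1-\alpha}\subset\G$, so $\{M_{\alpha,1-\alpha}\}_{\alpha\in[\frac12,1]}$ is a filtration of $\G$; the endpoint $M_{1,0}=\{\Re(f/z)>\tfrac12\}=\A_{\frac12}$, proper in $\G$ by the Marx--Strohh\"{a}cker chain. The upper Fekete--Szeg\"o bound is Theorem~\ref{th-FS-estim} with $\beta=1-\alpha$: the hypotheses $\alpha+\beta=1<2$ and $5\alpha+4\beta=\alpha+4<6$ hold, and $\mu=\frac{2-\alpha-\beta}{6-5\alpha-4\beta}=\frac1{2-\alpha}$. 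For sharpness I construct, as in the proof of Theorem~\ref{th-1-sharp}, functions $f^{(1)},f^{(2)}\in M_{\alpha,1-\alpha}$ whose $g_{\alpha,1-\alpha}$ equals $\frac12+\frac12\frac{1+z}{1-z}$ and $\frac12+\frac12\frac{1+z^2}{1-z^2}$ respectively; writing the defining relation as the Briot--Bouquet equation $p+\frac{1-\alpha}{\alpha}\frac{zp'}{p}=\frac{g-(1-\alpha)}{\alpha}$ (for $\alpha<1$; the case $\alpha=1$ is immediate and already covered by Keogh--Merkes), Lemma~\ref{lem-mm-2}(i) yields analyticity of $p$, hence of each $f^{(j)}$, the required positivity being $\Re\frac{g}{1-\alpha}-1>0$ since $\Re g>\tfrac12\ge1-\alpha$. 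Matching coefficients through $g_{\alpha,1-\alpha}=1+a_2z+[(2-\alpha)a_3-(1-\alpha)a_2^2]z^2+\cdots$ gives $a_2=a_3=1$ for $f^{(1)}$ and $a_2=0,\ a_3=\frac1{2-\alpha}$ for $f^{(2)}$, so $\Phi(f^{(1)},\lambda)=1-\lambda$ and $\Phi(f^{(2)},\lambda)=\frac1{2-\alpha}$, attaining both candidate maxima. I expect the main obstacle to be this sharpness step: confirming via Lemma~\ref{lem-mm-2} that the prescribed extremals genuinely lie in $M_{\alpha,1-\alpha}$ and extracting their coefficients cleanly, together with arranging the shifted-subordination admissibility in (i) to land on the exact constant $\frac1{2\alpha}$.
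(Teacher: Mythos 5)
Your proposal is correct and follows the same overall architecture as the paper: the inclusion $M_{\alpha,1-\alpha}\subset\G$ via a boundary-contact argument, part (i) via the improved bound $\Re\frac{f(z)}{z}>1-\frac1{2\alpha}$, and part (ii) by combining Theorem~\ref{filtration-alpha}, Theorem~\ref{th-FS-estim} with $\beta=1-\alpha$, and the same two Briot--Bouquet extremal functions $g^{(k)}=\frac1{1-z^k}$ (your $\frac12+\frac12\frac{1+z^k}{1-z^k}$ is the same function). The only genuine divergence is in how the intermediate facts are certified: where the paper verifies $\Re\frac{f(z)}{z}>1-\frac1{2\alpha}$ by rewriting $g_{\alpha,1-\alpha}-\frac12$ as a Briot--Bouquet expression and citing Theorem~3.2a of \cite{MiMo}, and then quotes Proposition~2.7 of \cite{BCDES} for the semigroup growth bound, you run the admissibility test of Lemma~\ref{lem-mm-e} on the shifted function $P=\frac{p-\delta}{1-\delta}$ and integrate the Cauchy problem \eqref{nS1} directly; likewise you replace the paper's explicit Jack-lemma computation with $\omega=\frac{f-z}{f+z}$ by the equivalent Miller--Mocanu admissibility check (as you note yourself). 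Your version is more self-contained, at the cost of re-deriving two standard results; both arguments are sound, and your coefficient identity $g_{\alpha,1-\alpha}=1+a_2z+\bigl[(2-\alpha)a_3-(1-\alpha)a_2^2\bigr]z^2+\cdots$ checks out.
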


\begin{proof}
Assume that $f\notin\G$. Consider the function $\omega $ defined by $\omega(z)=\frac{f(z)-z}{f(z)+z}$. For a fixed $z \in \D$, the value $\omega(z)$  lies in $\D$ if and only if $\Re\frac{f(z)}{z}>0$. According to our assumption, there is $z_0\in\D$ such that $\Re\frac{f(z)}{z}<0$ as $|z|<|z_0|$ while $\Re\frac{f(z_0)}{z_0}=0$ and, consequently, $|\omega_0|=1$, where $\omega_0:=\omega(z_0)$. By Lemma~\ref{lem-Jack} there is $k\ge1$ such that $z_0\omega'(z_0)=k\omega_0$. A straightforward calculation gives
\[
 g_{\alpha,1-\alpha}(z_0) = \alpha\frac{1+\omega_0}{1-\omega_0} +(1-\alpha)\left( 1+\frac{k\omega_0}{1+\omega_0}+\frac{k\omega_0}{1-\omega_0} \right).
\]
Since $f\in M_{\alpha,1-\alpha}$ and $\Re g_{\alpha,1-\alpha}(z_0)=1-\alpha$, we conclude that $\alpha< \frac12.$ Thus $M_{\alpha,1-\alpha}\subset \G$ whenever $\alpha\ge\frac12\,.$

Note that for $\alpha=1$, assertion (i) follows directly from \cite[Proposition~2.7]{BCDES}, while for $\alpha=\frac12$ it is trivial. So, we proceed with $\alpha\in\left(\frac12,1\right)$.
Denote $p(z)=\frac{f(z)}{z}+\frac{1-2\alpha}{2 \alpha}$.
Then
\begin{eqnarray*}
% \nonumber % Remove numbering (before each equation)
g_{\alpha,1-\alpha}(z)-\frac{1}{2} &=&\alpha \left( p(z)+\frac{2\alpha-1}{2\alpha}\right) +(1-\alpha)\left(1+\frac{zp'(z)}{p(z)+\frac{2\alpha-1}{2\alpha}}\right)-\frac{1}{2} \\
 &=& \alpha \left(p(z)+\frac{z p'(z)}{\frac{\alpha}{1-\alpha}p(z)+\frac{2\alpha-1}{2(1-\alpha)}}   \right).
\end{eqnarray*}
If $f \in M_{\alpha,1-\alpha}$, then according to Theorem~3.2a  in \cite{MiMo} (see also \cite{EMMR}) we have $\Re \frac{f(z)}{z}>1-\frac{1}{2\alpha}>0$ for all $z\in\D$.  By Proposition 2.7 in \cite{BCDES}, the estimate to the generated semigroup follows. So, assertion (i) is proven.

It follows from Theorem~\ref{filtration-alpha} that the family
$\left\{M_{\alpha,1-\alpha}\right\}_{\alpha\in[\frac12,1]}$ is a filtration of $\G$.

  Theorem \ref{th-FS-estim} applied to the case $\beta=1-\alpha$  gives
  \[
   \sup_{f\in M_{\alpha,1-\alpha}}|\Phi(f, \lambda)| \le \max \left(\frac{1}{2-\alpha},|1-\lambda|\right)\quad (\lambda \in \C).
   \]
   We have already mentioned in Section~\ref{sect-intro}, that this supremum attains at the value in right-hand side for $\alpha=0$ and $\alpha=1$. It remans to prove that for every $\alpha\in(0,1)$ this estimate is also sharp. We do this similarly to the proof of Theorem~\ref{th-1-sharp}.

  Namely,  we show that there are two functions $f^{(1)},f^{(2)}\in M_{\alpha,1-\alpha}$  such that the functions $g_{\alpha,1-\alpha}^{(1)},g_{\alpha,1-\alpha}^{(2)}$ constructed for them by \eqref{g} are
  \begin{eqnarray*}\label{g-1-n}
  % \nonumber % Remove numbering (before each equation)
    g_{\alpha,1-\alpha}^{(k)}(z) = \frac{1}{1-z^k},\qquad k=1,2.
  \end{eqnarray*}

 In order to use Lemma~\ref{lem-mm-2}, we choose $\beta=\frac{\alpha}{1-\alpha},\ \gamma=0,\ c=1$ and $h^{(k)}(z)=1+\frac1\alpha \cdot \frac{z^k}{1-z^k}\,,\ k=1,2.$ Then $$\Re\left(\beta h^{(k)}(z)+\gamma \right)=\frac{\alpha}{1-\alpha} +\frac{1}{1-\alpha}\cdot \frac{z^k}{1-z^k}>\frac{\alpha-\frac12}{1-\alpha}>0.$$
 Therefore, this lemma can be applied, that is, the solutions $q^{(k)}$ of the corresponding (Briot--Bouquet) differential equation belong to $\Hol(\D,\C)$. By construction, the functions $f^{(k)}(z)=zq^{(k)}(z)$ belong to $M_{\alpha,1-\alpha}$.

  Further, equality \eqref{g-alpha} enables to calculate first Taylor coefficients of $f^{(k)}$. In particular, for $f^{(1)}$ we can see that $a_2=a_3=1$. Hence, $\Phi(f^{(1)}, \lambda) = 1-\lambda$. Repeating such calculation for $f^{(2)}$ we get $a_2=0, \ a_3=\frac1{2-\alpha}.$ Thus $\Phi(f^{(2)}, \lambda) = \frac1{2-\alpha}.$  The proof is complete.
\end{proof}

\medskip

\section{Open questions}\label{sect-quest}
\setcounter{equation}{0}

Recall that by Theorem~\ref{th-FS-estim}
\[
|\Phi(f, \lambda)| \leq \max (\mu,|1-\lambda|), \text{ where } \mu:=\frac{2-\alpha-\beta}{6-5\alpha-4\beta}>0
\]
as $f \in M_{\alpha,\beta}$ with $\alpha+\beta<2$, $5\alpha+4\beta<6$.
We also know from Theorems~\ref{th-1-sharp} and \ref{M-alp-1-alp} that this estimate is sharp whenever either $\alpha=0$, $0\leq \beta \leq 1$, or $\frac12\leq \alpha \leq1$, $\beta=1-\alpha$. So, raises
\begin{questn}
Is this estimate sharp for all $(\alpha, \beta)$ that satisfy $\alpha+\beta<2$, $5\alpha+4\beta<6$?
\end{questn}
If the answer is negative,
\begin{questn}
Find $\sup\limits_{f \in M_{\alpha,\beta}}|\Phi(f, \lambda)|.$
\end{questn}

Next, it is interesting to understand common properties of elements of each class $M_{\alpha,\beta}.$ For instance,
\begin{questn}
What values $(\alpha,\beta)$ provide the class $M_{\alpha,\beta}$ consists of univalent functions?
\end{questn}
The only case we know the affirmative answer is $\alpha=0,\ \beta<2$.

\vspace{2mm}

Further, we now that $M_{\alpha,\beta} \subset \mathcal{G}$ as $\alpha\ge 0$ and $\beta \le 3\alpha-2$ by Corollary \ref{cor-M-g}; $M_{0,\beta}\subset\G$ for $\beta \leq 1$ by Lemma~\ref{lemm-star} and $M_{\alpha,1-\alpha}\subset \G$ for $\alpha\in\left[\frac12,2\right)$ by Theorem~\ref{M-alp-1-alp}. At the same time, in general the following question still stays open.
\begin{questn}
What values $(\alpha,\beta)$ provide $M_{\alpha,\beta} \subseteq \G$?
\end{questn}

The main question studied in this paper concerns the interpolation problem for formulas \eqref{K-M} and \eqref{class_A}. Specifically, we aimed to determine classes $\FF_\mu,\ \mu\in\left[\frac13,1\right],$ such that $|{\sup_{f \in \FF_\mu} \Phi(f,\lambda)|=  \max\left( \mu,|1-\lambda| \right)}.$ In Theorems~\ref{th-1-sharp} and \ref{M-alp-1-alp} we establish filtrations of generators that cover the cases $\mu\in\left[\frac13,\frac12\right] $ and $\mu\in\left[\frac23,1\right]$.
\begin{questn}
Complement the above results with the case $\mu\in\left[\frac12,\frac23\right].$
\end{questn}
We finish this paper with the conjecture. The authors believe it is true.
\begin{conj}
The filtrations constructed in Theorems~\ref{th-1-sharp} and \ref{M-alp-1-alp} are strict (see Definition~\ref{def-filt}).
\end{conj}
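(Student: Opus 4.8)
The plan is to prove strictness by exhibiting explicit separating functions, using the sharp Fekete--Szeg\"o values already computed in Theorems~\ref{th-1-sharp} and~\ref{M-alp-1-alp} as a parameter-dependent invariant. The guiding principle is that for a nested family over which the sharp bound $\sup|\Phi(\cdot,\lambda)|$ is a \emph{strictly} monotone function of the parameter, every inclusion is automatically proper: if two members coincided as sets, their suprema of a fixed functional would agree. Thus it suffices to specialize the sharp estimates to $\lambda=1$, where $|1-\lambda|=0$, and track how the resulting value varies with the parameter.

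For the Mocanu-type filtration $\{M_{0,\beta}\}_{\beta\in[0,1]}$, fix $0\le\beta<\beta_1\le1$ and set $\mu(\beta)=\frac{2-\beta}{6-4\beta}$. A direct computation gives $\mu'(\beta)=\frac{2}{(6-4\beta)^2}>0$, so $\mu(\beta)<\mu(\beta_1)$. Now take the extremal function $f^{(2)}$ built in the proof of Theorem~\ref{th-1-sharp} for the parameter $\beta_1$, that is, the solution of the Briot--Bouquet equation with $g_{0,\beta_1}^{(2)}(z)=\frac{\beta_1}{2}+\left(1-\frac{\beta_1}{2}\right)\frac{1+z^2}{1-z^2}$. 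By construction $f^{(2)}\in M_{0,\beta_1}$, and its Taylor coefficients satisfy $a_2=0$, $a_3=\mu(\beta_1)$, whence $\Phi(f^{(2)},1)=\mu(\beta_1)$. Were $f^{(2)}$ to lie in the smaller class $M_{0,\beta}$, the sharp estimate of Theorem~\ref{th-1-sharp} (at $\lambda=1$) would force $|\Phi(f^{(2)},1)|\le\mu(\beta)<\mu(\beta_1)$, a contradiction. Hence $f^{(2)}\in M_{0,\beta_1}\setminus M_{0,\beta}$, and together with the inclusion of Theorem~\ref{thm-filtr-beta} this yields $M_{0,\beta}\subsetneq M_{0,\beta_1}$.

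The filtration $\{M_{\alpha,1-\alpha}\}_{\alpha\in[\frac12,1]}$ is treated in exactly the same manner, with $\nu(\alpha)=\frac{1}{2-\alpha}$ replacing $\mu$. Here $\nu'(\alpha)=\frac{1}{(2-\alpha)^2}>0$, so $\nu(\alpha)<\nu(\alpha_1)$ for $\alpha<\alpha_1$. The extremal function $f^{(2)}$ built in the proof of Theorem~\ref{M-alp-1-alp} for the parameter $\alpha_1$, namely the solution with $g_{\alpha_1,1-\alpha_1}^{(2)}(z)=\frac{1}{1-z^2}$, belongs to $M_{\alpha_1,1-\alpha_1}$ and has $a_2=0$, $a_3=\nu(\alpha_1)$, so $\Phi(f^{(2)},1)=\nu(\alpha_1)>\nu(\alpha)$. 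The sharp bound of Theorem~\ref{M-alp-1-alp} then excludes $f^{(2)}$ from $M_{\alpha,1-\alpha}$, giving $M_{\alpha,1-\alpha}\subsetneq M_{\alpha_1,1-\alpha_1}$. Note that both extremal families extend to the right endpoints: at $\beta_1=1$ and $\alpha_1=1$ one recovers the classical Fekete--Szeg\"o extremals for $S^*(\frac12)$ and $\A_{\frac12}$, whose $\lambda=1$ values are $\mu(1)=\frac12$ and $\nu(1)=1$, respectively, so no pair $s<t$ is left uncovered.

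The step that carries the real weight is the verification that the chosen extremal genuinely separates the two classes, and this reduces precisely to the strict monotonicity of $\mu$ and $\nu$. Membership of $f^{(2)}$ in the larger class is inherited verbatim from the construction in the sharpness proofs (via the Briot--Bouquet argument of Lemma~\ref{lem-mm-2}), so no new extremal functions must be produced; the exclusion from the smaller class is obtained indirectly from the already established sharp upper bound rather than from a direct range computation for $g_{0,\beta}$ or $g_{\alpha,1-\alpha}$. I expect the only care needed is to confirm that the coefficient values $a_2=0$ and $a_3=\mu(\beta_1)$ (resp. $\nu(\alpha_1)$) persist for the extremal attached to the \emph{larger} parameter, which is immediate since these were computed for a generic parameter in the proofs cited.
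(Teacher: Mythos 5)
There is an important structural point to flag first: the statement you were asked to prove is posed in the paper as a \emph{conjecture} --- the authors explicitly leave it open and offer no proof of it. So there is no ``paper proof'' to compare against; your proposal has to be judged on its own merits.

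On those merits, your argument looks correct, and as far as I can check it actually settles the conjecture using only facts already established in the paper. The inclusions $M_{0,\beta}\subseteq M_{0,\beta_1}$ and $M_{\alpha,1-\alpha}\subseteq M_{\alpha_1,1-\alpha_1}$ are Theorems~\ref{thm-filtr-beta} and~\ref{filtration-alpha}; the upper bound at $\lambda=1$, namely $|a_3-a_2^2|\le\mu(\beta)=\frac{2-\beta}{6-4\beta}$ (resp.\ $\le\nu(\alpha)=\frac{1}{2-\alpha}$), is Theorem~\ref{th-FS-estim}; and the separating function is the paper's own extremal $f^{(2)}$ at the \emph{larger} parameter, whose coefficients $a_2=0$, $a_3=\mu(\beta_1)$ (resp.\ $a_3=\nu(\alpha_1)$) I verified directly from the Briot--Bouquet recursion: with $q=1+q_1z+q_2z^2+\cdots$ one gets $q_1=0$ and $q_2=\frac{2-\beta_1}{3-2\beta_1}$, hence $a_3=\frac{q_2}{2}=\mu(\beta_1)$ in the Mocanu case, and $q_2=\frac{1}{2-\alpha_1}=\nu(\alpha_1)$ in the other. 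Strict monotonicity of $\mu$ and $\nu$ then excludes $f^{(2)}$ from every smaller class, so each inclusion is proper in the sense of Definition~\ref{def-filt}. Note that your argument does not even need attainment: it suffices that the sharp supremum over the larger class strictly exceeds the bound over the smaller one.

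Two small points deserve explicit care rather than the passing mention you give them. First, at the right endpoints $\beta_1=1$ and $\alpha_1=1$ the Briot--Bouquet construction of Lemma~\ref{lem-mm-2} degenerates (the coefficient $1-\beta_1$ vanishes, resp.\ the paper's lower bound $\frac{\alpha-\frac12}{1-\alpha}$ blows up), so there you must invoke the classical extremals directly --- $z(1-z^2)^{-1/2}\in S^*\bigl(\tfrac12\bigr)$ with $a_3=\tfrac12$ and $\frac{z}{1-z^2}\in\A_{\frac12}$ with $a_3=1$ --- which is exactly what your final remark does, but it should be stated as part of the proof, not as an aside. Second, your opening heuristic (``if two members coincided as sets, their suprema would agree'') only yields that the classes differ; strictness of the filtration requires a separating function for \emph{every} pair $s<t$, and it is the explicit extremal attached to the larger parameter, combined with the already-proved inclusions, that delivers this. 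With those points made precise, your proof is complete, and it is strictly stronger than what the paper claims, since the authors state this only as a conjecture they believe to be true.
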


%\section{The case  $\mu\in[1, \infty)$}\label{sect-new_class}
%\setcounter{equation}{0}

%\newpage

\end{document}